
\documentclass[12pt]{article}
\usepackage{amsmath}
\usepackage{amsfonts}
\usepackage{amssymb}
\usepackage{graphicx}
\usepackage{mathrsfs}
\usepackage{latexsym}
\usepackage{xy} \xyoption{all}
\usepackage{amsthm}
\usepackage{epsfig}
\RequirePackage{color}
\usepackage{multicol}

\setcounter{MaxMatrixCols}{30}

\newtheorem{theorem}{Theorem}[subsection]

\newtheorem{corollary}[theorem]{Corollary}

\newtheorem{example}[theorem]{Example}

\newtheorem{lemma}[theorem]{Lemma}

\newtheorem{proposition}[theorem]{Proposition}
\newtheorem{remark}[theorem]{Remark}

\begin{document}
\title{The Singular Ideal and the Socle of Incidence Algebras
\footnotetext{2010 \textit{Mathematics Subject Classification}: 16D25;
\textit{Key words and phrases:} Incidence algebras, socle, singular ideal.}}
\author{M\"{u}ge KANUN\.{I} $^{(1)}$, \"Ozkay \"OZKAN $^{(2)}$\\$^{(1)\text{ }}$Department of Mathematics, D\"{u}zce University, \\Konuralp 81620 D\"{u}zce, Turkey.\\E-mail: mugekanuni@duzce.edu.tr\\$^{(2)\text{ }}$Department of Mathematics, Gebze Technical University, 
\\41400 Gebze, Kocaeli, Turkey\\E-mail: ozkayoz@yahoo.com\\}
\date{}
\maketitle

\begin{abstract}
Let $R$ be a ring with identity and $I(X,R)$ be the incidence algebra of a
locally finite partially ordered set $X$ over $R.$ In this paper, we compute
the socle and the singular ideal of the incidence ring for some $X$ in terms
of the socle of $R$ and the singular ideal of $R$, respectively.
\end{abstract}

\author{}
\maketitle

\section{Introduction and Preliminaries}

The investigation of a ring or algebra $R$ is usually enriched by
understanding of special types of ideals or subspaces of $R$ such as the
Jacobson radical, the prime radical, the socle, the singular ideal, the
center, etc. Although incidence rings have been interest of study for a few
decades, there does not seem to be any results in the literature on the
socle of incidence algebras.

In this paper, we will be computing the left socle and the left singular
ideal of an incidence algebra ${I(X,R)}$, however similar statements hold
for right equivalents by replacing $Min(X)$ by $Max(X)$ and the upper
finiteness by the lower finiteness notion. The outlay of the paper is as follows: 
We give preliminaries and easily computable observations in the introduction.
In Section 2, we look at the singular ideal of particular incidence algebras. Theorem \ref{SingularMain} states that for any locally finite partially ordered set $X$ and any ring $R$, $ FS(I(X,Sing_l(R))) \subseteq Sing_l({I(X,R)}) \subseteq I(X,Sing_l(R)).$ 

Proposition \ref{MinSingular} computes that the left singular ideal of ${I(X,R)}$ is the incidence subalgebra of $X$ over the left singular ideal of $R$, under the hypothesis that $Min(X)$ is a maximal antichain and $\kappa(x)$ is finite for any $x
\in Min(X)$. Corollary \ref{CorMinSingular} states that if $X$ is finite, then 
$Sing_l({I(X,R)}) = I(X,Sing_l(R))$. 

In Section 3, we look at the socle of incidence algebras.  
Theorem	\ref{MinSocle} states that if $Min(X)$ is a maximal antichain and $\kappa(x)$ is finite for any $x \in 	Min(X) $, then  
	$Soc_l({I(X,R)}) = \bigoplus_{x \in Min(X)} S(x,Soc_l(R)).$ 
	
	Proposition \ref{EmptyMin} computes $Soc_{l}({I(X,R)})=\{0\}$ if 
	$Min(X)=\emptyset $. The main theorem of this section is Theorem \ref{Main} that states $Soc_{l}({I(X,R)})=\bigoplus_{x\in Min(Y)}S(x,Soc_{l}(R))$ under the assumption that $R$ is an Artinian and nonsingular ring, $Min(Y)$ is finite and $\kappa (x)$ is finite for all $x\in X$.  
	We also show that the left and right socles of ${I(X,R)}$ coincide if and only if $R$ satisfies the same property and ${I(X,R)}$ is a product of copies of $R$. At the end of the paper, we provide an example of an incidence algebra with non-equal left and right socles.

Throughout the article, $R$ is a ring with unity and not necessarily
commutative. We assume $X$ is a locally finite partially ordered set. $%
Min(X) $ is the set of all minimal elements of $X$, and $Max(X)$ is the set
of all maximal elements of $X$. $Min(X)$, $Max(X)$ may very well be empty
sets. For any set $U$, we will use $|U|$ to denote the cardinality of $U$.

The \textit{socle} $Soc(M)$ of an $R$-module $M$ is the sum of all its
simple (minimal) submodules. If we take $M=R$ as a left $R$-module, then the
sum of all minimal left ideals of $R$ is \textit{the left socle} of $R$,
denoted by $Soc_l(R)$. Similarly the right socle of $R$, $Soc_r(R)$ is
defined as the sum of all minimal right ideals of $R$. A \textit{left
(right) essential ideal} $E$ of $R$ is a left (right) ideal which intersects
any non-zero left (right) ideal of $R$ non-trivially. It is well-known that
the left (right) socle of a ring is equal to the intersection of all left
(right) essential ideals.

For any subset $E$ of $R$, let $ann_l(E)$, ($ann_r(E)$) denote the left
(right) annihilator of $E$ and for any $a \in R$, $E:a =\{r \in R: ra \in E
\}$. If $E$ is an ideal, then so is $E:a$.

A \textit{left dense} ideal $D$ of a ring $R$ is a left ideal with the
property that for any $a\in R$, $ann_r(D:a)=0.$

It is an easy exercise to show that any left (right) dense ideal is a left
(right) essential ideal. The intersection of any finite collection of left
(right) essential (dense) ideals is always left (right) essential (dense).
However, the intersection of all the left (right) essential ideals of the
ring $R$ is a left (right) essential ideal if and only if there is a minimal
left (right) essential ideal of $R$. Similarly, intersection of all left
(right) dense ideals is a left (right) dense ideal if and only if there is a
minimal left (right) dense ideal of $R$.

For any ring $R$, 
\begin{equation*}
Sing_l(R)=\{x\in R: \, ann_l(x) \text{ is a left essential ideal of } R\}
\end{equation*}
is called \textit{the left singular ideal} of $R$. A ring $R$ is a \textit{%
left nonsingular ring} if $Sing_l(R)=0.$ Similar statements hold for right
singular ideal $Sing_r(R)$ of $R$.

$Soc_r(R)$, $Soc_l(R)$, $Sing_l(R)$ and $Sing_r(R)$ are all two-sided ideals
of $R$.



Let $X$ be a partially ordered set. For any $x,z\in X$ with $x \leq z$, the
interval $[x,z]$ is defined as \newline
\begin{equation*}
[x,z]=\{ y\in X: x \leq y\leq z\}.
\end{equation*}
If every interval of $X$ is a finite set, then $X$ is a locally finite
partially ordered set.

For a ring $R$ and a locally finite partially ordered set $X$, the 
\textit{incidence ring} $I(X,R)$, is the set of functions $f:X\times
X\rightarrow R$ such that $f(x,y)=0$ unless $x\leq y,$ with the following
operations 
\begin{eqnarray*}
(f+g)(x,y) &=&f(x,y)+g(x,y) \\
fg(x,y) &=&\sum\limits_{x\leq z\leq y}f(x,z)g(z,y)
\end{eqnarray*}
for all $f,g\in I(X,R)$ and $x,y\in X.$ Also, $I(X,R)$ becomes an $R$-algebra with the operation: \newline
$(rf)(x,y)=rf(x,y)$ for any $r\in R.$ 
The element $\delta \in I(X,R)$ which is defined as

\begin{center}
$\delta (u,v)=\left\{ 
\begin{array}{cc}
1 & \text{ if }u=v \\ 
0 & \text{ otherwise }%
\end{array}
\right. $
\end{center}
is the identity element of $I(X,R)$. For any $(x,y)\in X\times X$ with $x\leq y$, we define $e_{xy}\in I(X,R)$ as

\begin{center}
$e_{xy}(u,v)=\left\{ 
\begin{array}{cc}
1 & \text{ if }(u,v)=(x,y) \\ 
0 & \text{otherwise }%
\end{array}
\right. .$
\end{center}

Let $supp(g) =\{ (x,y) ~:~ g(x,y)\neq 0\} $ denote the support of $g\in
I(X,R)$. The set 
\begin{equation*}
FS(I(X,R))= \{ g \in I(X,R) ~:~ |supp(g)| < \infty \}
\end{equation*}
is a subring of $I(X,R)$ which is called the \textit{finite support of} $%
I(X,R)$.

For a detailed discussion about incidence rings see
\cite{SO:IncAlg}.


A partially ordered set, $X$ is called \textit{upper finite} if for each $%
x\in X,$ $U(x):=\{y\in X~|~x \leq y\}$ is finite. Similarly, $X$ is called 
\textit{lower finite} if for each $x\in X$, $L(x):=\{y\in X~|~y \leq x\}$ is
finite. For any $x \in X$, define $\kappa(x):= |U(x)|$ and $\lambda(x) : =
|L(x)|$.



A subset $S$ of $X$ is called an \textit{antichain} if for any pair $(x,y)$
of elements in $S$ are incomparable. An antichain $S$ is called a \textit{%
maximal antichain of }$X$ if $S$ is a maximal element in the partially
ordered set of the collection of all antichains of $X$ under inclusion. We
first state a few easily deducible observations that will be used in the
sequel.

\begin{lemma}
\label{propPoset} Assume $X$ is a locally finite partially ordered set. Then

\textrm{(i)} 
$Min(X)$ is a maximal antichain if and only if for each $y\in X$, there
exists a minimal element $x\in Min(X)$ with $x \leq y$.

\textrm{(ii)} 
If $Min(X)$ is a maximal antichain and $\kappa(x)$ is finite for all $x \in
Min(X)$, then $X$ is upper finite.

\textrm{(iii)} 
If $X$ is an antichain, then $Min(X)=Max(X)$.

\textrm{(iv)} 
If $Min(X)=Max(X)$ and $X$ is upper finite, then $X$ is an antichain.
\end{lemma}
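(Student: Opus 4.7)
The statement collects four short order-theoretic facts, so the plan is to dispatch each part separately; none is deep, but (i) and (iv) use maximality/minimality in slightly subtle ways, so I will spell those out, while (ii) and (iii) follow cleanly once (i) is in hand.

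For part (i), I would argue both implications directly from the definition of a maximal antichain. For the forward direction, given $y\in X$, if $y\in Min(X)$ take $x=y$; otherwise $Min(X)\cup\{y\}$ strictly extends $Min(X)$, so by maximality it is not an antichain, which produces some $x\in Min(X)$ comparable to $y$. The key observation is that $y<x$ is impossible (it would contradict minimality of $x$), and $y=x$ is excluded because $y\notin Min(X)$, so $x\le y$. For the converse, $Min(X)$ is automatically an antichain (two minimal elements cannot be comparable without one failing to be minimal); for maximality, any $y\notin Min(X)$ has some $x\in Min(X)$ with $x<y$ by hypothesis, so $Min(X)\cup\{y\}$ is not an antichain.

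Part (ii) is then a one-line consequence of (i): given any $y\in X$, pick $x\in Min(X)$ with $x\le y$; then $U(y)\subseteq U(x)$, which is finite by hypothesis, so $U(y)$ is finite. Part (iii) is immediate from the definitions, since in an antichain no element lies strictly above or strictly below any other, so every element is simultaneously minimal and maximal.

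The least trivial part is (iv), and the main obstacle is making sure that a maximum of $U(y)$ in $X$'s order is really a global maximum of $X$, not merely a maximum relative to $U(y)$. Given $y\in X$, the set $U(y)$ is finite and nonempty, so it contains an element $z$ maximal with respect to $\le$ restricted to $U(y)$. I would then note that any element strictly above $z$ in $X$ would also lie in $U(y)$ (by transitivity from $y\le z$), contradicting maximality of $z$ inside $U(y)$; hence $z\in Max(X)=Min(X)$. Since $z\in Min(X)$ and $y\le z$, minimality of $z$ forces $y=z$, so $y\in Max(X)$. Applied to every $y$, this shows every element of $X$ is maximal, hence no two elements are comparable, i.e., $X$ is an antichain.
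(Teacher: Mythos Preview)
Your proof is correct and follows essentially the same approach as the paper: part (i) via the observation that adjoining a non-minimal element to $Min(X)$ would destroy the antichain property, part (ii) from (i) via $U(y)\subseteq U(x)$, part (iii) directly, and part (iv) by using upper finiteness to locate a maximal element above a given point and then invoking $Max(X)=Min(X)$. The only cosmetic difference is that the paper phrases (i) and (iv) as proofs by contradiction while you argue them directly, but the underlying ideas are identical.
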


\begin{proof}
\textrm{(i)} Clearly, $Min(X)$ is an antichain, as no two minimal elements
are related. Assume $Min(X)$ is a maximal antichain. On the contrary if
there is a $y\in X$ with no $x\in Min(X)$ such that $x \leq y$, then $x$ and 
$y$ are unrelated, hence $Min(X) \cup \{ y\}$ is an antichain. This
contradicts the maximality of $Min(X)$. 
Conversely, assume on the contrary that $Min(X)$ is not a maximal antichain,
hence there exists an antichain $Y$ properly contaning $Min(X)$. Let $y \in
Y \backslash Min(X)$, so $y$ is incomparable with any $x \in Min(X)$ which
contradicts the hypothesis.

\textrm{(ii)} Since $Min(X)$ is an antichain, by \textrm{(i)}, for each $%
y\in X $, there exists a minimal element $x\in Min(X)$ with $x \leq y$. As $%
\kappa(y) \leq \kappa(x)$, the result follows.

\textrm{(iii)} If $X$ is an antichain, then every element in $X$, is both a
maximal and a minimal element. So $Min(X)=Max(X)=X$. 

\textrm{(iv)} Assume not, then there exists two distinct elements $x,y$ such
that $x \leq y$. Since $X$ is upper finite, $U(x) = \{z \in X \ : \ x \leq z
\}$ is a finite set, say $\{y_0:=x,y_1:=y,..., y_n \}$. Notice that if $y_i
\in U(x)$, for $i \neq 0$, then $U(y_i) \subsetneq U(x)$. Then there exists $%
z \in U(x)$ ($z \neq x$) which is also in $Max(X)$. Since $Max(X)=Min(X)$, $%
z\in Min(X)$ which is a contradiction to $x \leq z$.
\end{proof}

\medskip


Note that the converses of Lemma \ref{propPoset} \textrm{(ii)}, (iii) are not true. For instance, the partially ordered set $X$ in Example~\ref{UpfiniteNotMinXmax}
is upper finite, but $Min(X)$ is not a maximal antichain.


\begin{example}
\label{UpfiniteNotMinXmax} Let $X =\{x_i,y_i~:~ i \in \mathbb{N}\}$ be a
partially ordered set with the relations $x_i \leq y_1 $, $%
y_{i+1} \leq y_i $ for all $i \in \mathbb{N}$. The Hasse diagram of $X$ is 
\begin{equation*}
\xymatrix{&&{\bullet}^{y_1}&&& \\
{\bullet}^{x_1}\ar@{-}[urr]&{\bullet}^{x_2}\ar@{-}[ur]&{\bullet}^{y_2}%
\ar@{-}[u]&{\bullet}^{x_3}\ar@{-}[ul]& {\bullet}^{x_4}\ar@{-}[ull]&\cdots \\
&&{\bullet}^{y_3}\ar@{-}[u]\ar@{-}[d]&&& \\ &&\vdots &&& .}
\end{equation*}

Hence, $Min(X) = \{x_i~:~ i \in \mathbb{N}\}$ and $Max(X) = \{y_1 \}$. Now, $%
Min(X)$ is a non-empty antichain, which is not maximal, since $Min(X) \cup
\{y_i\}$ is a maximal antichain for any $i \geq 2$. Moreover, $\kappa(x)$ is
finite for all $x \in X$, that is $X$ is upper finite.

\end{example}

\medskip

\begin{remark}
\label{remarkAli} \textrm{When $X$ is finite, for any $y$ in $X$, there is $%
x $ in $Min(X)$ with $x \leq y$. By Lemma~\ref{propPoset}(i), $Min(X)$ is a
maximal antichain. }
\end{remark}



\medskip

Let $X$ be a locally finite partially ordered set with a subset $Y$ defined as:
\begin{equation*}
Y =\{y \in X : \mbox{ there is a }x \in Min(X) \mbox{ with } x \leq y\}.
\end{equation*}
Then,  $X = Y \sqcup (X \backslash Y)$ and let $Z = X \backslash Y$. Now, we mention some properties of $%
X,Y $ and $Z$ in Lemma \ref{Veli}.

\begin{lemma}
\label{Veli} 1. Let $X$ be a locally finite partially ordered set, $Y$ and $%
Z $ be subposets defined as above. Then $X = Y \sqcup Z$, (ie. $X$ is a
disjoint union of $Y$ and $Z$).

2. $Min(Y)=Min(X)$ and $Min(Z) = \emptyset$.

3. $Min(Y)$ is a maximal antichain of $Y$.

4. $Min(Y)$ is a maximal antichain of $X$ if and only if $Z = \emptyset$.
\end{lemma}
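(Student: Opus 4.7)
The plan is to argue the four parts essentially in order, using Lemma \ref{propPoset}(i) as the workhorse for the ``maximal antichain'' statements and a short ``downward-closed'' argument for the claim about $Min(Z)$.

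Part 1 is nothing more than the definition $Z=X\setminus Y$, so I would simply record it. For Part 2, I would first observe $Min(X)\subseteq Y$ (since $x\leq x$ for every $x\in Min(X)$), so any $x\in Min(X)$ is still minimal when viewed inside $Y\subseteq X$, giving $Min(X)\subseteq Min(Y)$. For the reverse inclusion, take $y\in Min(Y)$; by the defining property of $Y$ there is some $x\in Min(X)\subseteq Y$ with $x\leq y$, and minimality of $y$ in $Y$ forces $y=x\in Min(X)$.

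For the second half of Part 2, I would show the stronger structural fact that $Z$ is downward closed in $X$: if $z\in Z$ and $w\leq z$ in $X$ with $w\in Y$, then some $x\in Min(X)$ satisfies $x\leq w\leq z$, contradicting $z\notin Y$. Hence $w\in Z$. Now suppose for contradiction that $z\in Min(Z)$. Then $z$ cannot be minimal in $X$, for otherwise $z\in Min(X)\subseteq Y$, contradicting $z\in Z$. So some $w<z$ exists in $X$, and by the downward-closure $w\in Z$, contradicting minimality of $z$ in $Z$. Thus $Min(Z)=\emptyset$.

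Parts 3 and 4 are then almost immediate via Lemma \ref{propPoset}(i). For Part 3, every $y\in Y$ has some $x\in Min(X)=Min(Y)$ with $x\leq y$ by the definition of $Y$, so Lemma \ref{propPoset}(i) applied inside the poset $Y$ yields that $Min(Y)$ is a maximal antichain of $Y$. For Part 4: if $Z=\emptyset$ then $X=Y$ and the previous part gives the conclusion; conversely, if $Min(Y)=Min(X)$ is a maximal antichain of $X$, then Lemma \ref{propPoset}(i) applied in $X$ says every $y\in X$ dominates some minimal element of $X$, i.e.\ every $y\in X$ lies in $Y$, so $Z=\emptyset$. I do not anticipate a genuine obstacle here; the only subtle point is remembering to argue the downward-closure of $Z$ in Part 2, which is what makes the $Min(Z)=\emptyset$ statement work cleanly rather than requiring any chain condition beyond local finiteness.
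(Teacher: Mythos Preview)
Your proposal is correct and follows essentially the same approach as the paper's proof: both establish $Min(X)=Min(Y)$ by a two-way inclusion, show $Min(Z)=\emptyset$ via the observation that anything below an element of $Z$ must again lie in $Z$, and derive Parts 3 and 4 from Lemma~\ref{propPoset}(i). Your write-up is in fact a bit cleaner---you note that Part~1 is immediate from $Z:=X\setminus Y$, that minimality in $X$ trivially passes to the subset $Y$, and you explicitly isolate the downward-closure of $Z$---but the underlying argument is the same.
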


\begin{proof}
1. Assume $Min(X)\neq \emptyset $. There exists a $x\in Min(X)$ with $x\leq
x $, by definition of $Y$, we get $x \in Y$, then $Y\neq \emptyset$. Assume
there exists $a\in X$ but $a\notin Min(X)$. We have either $a\geq x$ or $a$
and $x$ are unrelated for some $x\in Min(X)$. For the case $a\geq x$, $a$
must be in $Y.$ Otherwise, if $a$ and $x$ are unrelated, $a\in X\ \backslash
Y$\textbf{\ }. For all such $a\in X$, either $a\in Y$ or $a\in X\ \backslash
Y$.

On the other hand, if $Min(X)=\mathbf{\emptyset }$, then $Y=\mathbf{%
\emptyset }$ and $X=Z$.

2. $\mathbf{Min(X)\subseteq Min(Y)}:$ Let $\beta \in Min(X)\subset Y$ and
assume that $\beta \notin Min(Y)$. By definition of the set $Y$, there
exists a $\alpha \in Min(Y)$ with $\alpha \leq \beta $. Since $\alpha \in Y$%
, there exists $\gamma \in Min(X)$ with $\gamma \leq \alpha \leq \beta $.
This contradicts to minimality of $\beta $. Hence $\beta \in Min(Y)$.

$\mathbf{Min(Y)\subseteq Min(X)}:$ Let \ $\alpha \in Min(Y)$. Since $%
Min(Y)\subseteq Y$ and using the result $Min(X)\subseteq Min(Y)$, there
exists $\beta \in Min(X)$ with $\beta \leq \alpha $. Since $\beta \in Min(X)$
then $\beta \in Y$. Hence $\beta \leq \alpha $ and $\alpha \leq \beta $.
That is $\alpha =\beta $, then $\alpha \in Min(X)$.

$\mathbf{Min(Z)=\emptyset }$ $:$ Assume there exists an element $z\in Min(Z)$%
. Since $Min(Z)\subseteq Z$, $z\in Z=X\ \backslash Y$, that is $z\notin Y$.
Note that $z\notin Min(X)$, so there exists $x_{1}\in X$ with $x_{1}\neq z$
such that $x_{1}\leq z.$

We claim that $x_{1}\in Z$ i.e $x_{1}\notin Y$. Otherwise, if $x_{1}\in Y$,
there exists $x_{2}\in Min(X)$ such that $x_{2}\leq x_{1}\leq z$ which
implies $z\in Y$, a contradiction.

3. Consider the definition of $Y$. For each $y\in Y$ there exists a minimal
element $x\in Min(X)$ with $x\leq y$. By (2), $Min(Y)=Min(X)$. So $x\in
Min(Y)$. By Lemma \ref{propPoset}(i), $Min(Y)$ is a maximal antichain of $Y.$

4. Assume $Min(Y)$ is not a maximal antichain. There
exists an element $z\in X$ incomparable with any element of $Y$. That is $%
z\in X\ \backslash Y$, $Z\neq \mathbf{\emptyset }$.

Assume $Z=\mathbf{\emptyset }$. Then by (1), $X=Y$.
Using definition of $Y$, there exists $x\in Min(X)$ such that $x\leq y$.
Thus $Min(X)\neq \mathbf{\emptyset }$. Since $X=Y$, there exists $y\in Y$
such that $y\leq x$. By Lemma\ref{propPoset}(i) $Min(Y)$ is a maximal
antichain of $Y=X$.
\end{proof}

\section{Singular Ideal of Incidence Algebras}


In this section, we will investigate the left singular ideal of an incidence algebra.  
Moreover, similar arguments that appear here can be used to achieve analogue results for the right singular ideal.

We start by quoting some results that appeared in literature that is needed. 
The following lemma is from \cite{Trio:CharIncAlg}, we give the proof for
completeness of the argument in the sequel. 

\begin{lemma}
\label{lemma 1.3} (\cite[Lemma 11]{Trio:CharIncAlg}) Assume $r$ is a
non-zero element of $R$, and $x,y\in X$ with $x\leq y. $ Then $%
ann_l(re_{xy}) $ is a left essential ideal of $I(X,R)$ if and only if $%
ann_l(r)$ is a left essential ideal of $R.$
\end{lemma}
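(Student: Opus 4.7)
The plan is to first pin down exactly what $\mathrm{ann}_l(re_{xy})$ looks like componentwise. Using the multiplication rule, for any $f\in I(X,R)$ one checks that $(f\cdot re_{xy})(u,v)=f(u,x)\,r$ when $v=y$ and $u\le x$, and $0$ otherwise. Hence
\[
f\in \mathrm{ann}_l(re_{xy})\iff f(u,x)\in \mathrm{ann}_l(r) \text{ for every } u\le x.
\]
This explicit description is what drives both implications, so I would state it as a preliminary computation before starting the case work.

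For the ``only if'' direction I would start with a non-zero left ideal $I\subseteq R$ and any $0\ne s\in I$. Since $I(X,R)$ has identity $\delta$, the principal left ideal $I(X,R)\cdot se_{xx}$ is non-zero. By essentiality of $\mathrm{ann}_l(re_{xy})$, pick $0\ne g=h\cdot se_{xx}$ in the intersection. A direct calculation gives $g(a,b)=h(a,x)s$ when $b=x$ and $a\le x$, and $0$ otherwise; in particular every non-zero value $g(a,x)$ lies in $Rs\subseteq I$. The condition $g\in \mathrm{ann}_l(re_{xy})$ forces $g(a,x)\in \mathrm{ann}_l(r)$, so any non-zero $g(a_0,x)$ produces an element of $I\cap\mathrm{ann}_l(r)$.

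For the ``if'' direction, take a non-zero left ideal $J\subseteq I(X,R)$ and $0\ne f\in J$. If $f\in \mathrm{ann}_l(re_{xy})$ we are done; otherwise the componentwise description supplies some $u\le x$ with $f(u,x)\notin \mathrm{ann}_l(r)$. The principal left ideal $Rf(u,x)$ of $R$ is non-zero, so essentiality of $\mathrm{ann}_l(r)$ yields $s\in R$ with $sf(u,x)\ne 0$ and $sf(u,x)r=0$. The element $se_{uu}\cdot f$ lies in $J$ because $J$ is a left ideal, and a short calculation gives $(se_{uu}f)(a,b)=sf(u,b)$ when $a=u$, $u\le b$, and $0$ otherwise. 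Thus $(se_{uu}f)(u,x)=sf(u,x)\ne 0$, while $(se_{uu}f)(a,x)r=0$ for every $a\le x$, putting $se_{uu}f$ in $J\cap \mathrm{ann}_l(re_{xy})$.

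The step I expect to require the most care is the ``if'' direction: one must witness the intersection concretely, and the natural candidate $e_{uu}fe_{xx}$ is unavailable because $J$ is only a one-sided ideal. Using $se_{uu}f$ (multiplying only from the left) is the key trick, which is why the detailed componentwise identity for $\mathrm{ann}_l(re_{xy})$ recorded at the start is indispensable.
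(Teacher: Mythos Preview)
Your proof is correct. The preliminary description of $\mathrm{ann}_l(re_{xy})$ and the ``only if'' direction match the paper's argument essentially verbatim.

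In the ``if'' direction your approach is genuinely simpler than the paper's. The paper does not localize at a single $u$: instead it lets $S=\{z:f(z,x)\ne 0\}$, chooses for every $z\in S$ an $r_z\in R$ with $0\ne r_zf(z,x)\in\mathrm{ann}_l(r)$, and builds a multiplier $g$ with $g(u,v)=r_z$ whenever $u\le z=v$ (and $0$ otherwise). One then has $gf(u,x)=\sum_{z\in S\cap[u,x]}r_zf(z,x)\in\mathrm{ann}_l(r)$; verifying $gf\ne 0$ requires invoking local finiteness to find $z\in S$ with $S\cap[z,x]=\{z\}$. Your choice of $se_{uu}f$ avoids this: it zeros out every row except row $u$, so non-vanishing is immediate from $sf(u,x)\ne 0$, and membership in $\mathrm{ann}_l(re_{xy})$ follows because the only possibly non-zero entry in column $x$ is $sf(u,x)$, which was chosen to annihilate $r$. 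The paper's construction is more global but buys nothing extra here; your localized argument is cleaner and does not explicitly appeal to local finiteness at this step.
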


\begin{proof}
First, note that $ann_l(re_{xy})=\{g\in I(X,R): \, g(u,x)\in ann_l(r) \text{
for all } u \in X\}$.

Assume $ann_l(r)$ is a left essential ideal of $R.$ Take any non-zero $f\in
I(X,R)$.

Case 1: If $f(u,x)=0$ for all $u\in X,$ then $f\in ann_l(re_{xy}).$ Thus $%
ann_l(re_{xy})\cap I(X,R)f\neq \{0\}.$

Case 2: Otherwise, there exists $z\in X$ such that $f(z,x)\neq 0.$ Let $S$
be the set of all $z \in X$ such that $f(z,x) \neq 0$. Then $ann_l(r)\cap
Rf(z,x)\neq \{0\}.$ So there exists $r_{z}\in R$ such that $r_{z}f(z,x)\in
ann_l(r),$ $r_{z}f(z,x)\neq 0.$ Now, construct $g\in I(X,R)$ where 
\begin{equation*}
g(u,v)=\left\{ 
\begin{array}{ccc}
r_{z} & if & u \leq z~~and~~v = z \\ 
0 &  & otherwise%
\end{array}
\right. .
\end{equation*}
For any $u\in X,$ $S \cap [u,x]$ is a finite set, as $X$ is locally finite,
so 
\begin{equation*}
gf(u,x)=\sum_{t}g(u,t)f(t,x)= \sum_{z \in S \cap [u,x]}g(u,z)f(z,x)= \sum_{z
\in S \cap [u,x]} r_{z}f(z,x) \in ann_l(r).
\end{equation*}
So $gf \in ann_l(re_{xy})$. Now, we show $gf \neq 0$. There is a $z \in S$,
and an interval $[z,x]$ such that $S \cap [z,x] = \{ z \}$. Then $%
gf(z,x)=g(z,z)f(z,x) = r_z f(z,x) \neq 0$.

Then $gf\in ann_l(re_{xy})\cap I(X,R)f.$ Hence $ann_l(re_{xy})$ is a left
essential ideal of $I(X,R).$

Conversely, assume $ann_l(re_{xy})$ is a left essential ideal of $I(X,R).$
Take any non-zero $a \in R.$ Then $I(X,R)(a e_{xx})\cap ann_l(re_{xy})\neq
0, $ so there exists $f\in I(X,R)$ such that $fa e_{xx}\neq 0$ and also $fa
e_{xx}\in ann_l(re_{xy}).$ Hence there exists $u\in X$ such that $%
f(u,x)a\neq 0$ and also $f(u,x)a\in ann_l(r).$ Therefore, $Ra \cap
ann_l(r)\neq 0$ and $ann_l(r)$ is left essential.
\end{proof}

\bigskip

Now, we are ready to prove that for any $X$ and $R$, the finite
support of $I(X,Sing_l(R))$ is contained in the singular ideal of ${I(X,R)}$.

\begin{proposition}
\label{finitesupport} For any partially ordered set $X$ and ring $R$, 
$$ FS(I(X,Sing_l(R))) \subseteq Sing_l({I(X,R)}).$$
\end{proposition}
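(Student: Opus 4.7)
The plan is to reduce the claim to the special case handled by Lemma~\ref{lemma 1.3}. Given $g\in FS(I(X,Sing_l(R)))$, the support $supp(g)$ is finite by definition of $FS$, and each value $g(x,y)$ lies in $Sing_l(R)$. I would first write $g$ as the finite sum
\begin{equation*}
g \;=\; \sum_{(x,y)\in supp(g)} g(x,y)\, e_{xy},
\end{equation*}
so that every summand has the form $r e_{xy}$ with $r=g(x,y)\in Sing_l(R)$ and $x\leq y$.

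Next, I would apply Lemma~\ref{lemma 1.3} to each summand. By the very definition of $Sing_l(R)$, the left annihilator $ann_l(r)$ is a left essential ideal of $R$; hence, by Lemma~\ref{lemma 1.3}, $ann_l(re_{xy})$ is a left essential ideal of $I(X,R)$. This means $re_{xy}\in Sing_l(I(X,R))$ for every $(x,y)\in supp(g)$.

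Finally, I would invoke the fact (stated in the introduction) that $Sing_l(I(X,R))$ is a two-sided ideal of $I(X,R)$ and in particular is closed under finite sums. Since $g$ is expressed as a finite sum of elements each lying in $Sing_l(I(X,R))$, it follows that $g\in Sing_l(I(X,R))$, establishing the desired containment. There is essentially no obstacle here: the finiteness assumption on the support is exactly what is needed to reduce the problem to a finite collection of applications of Lemma~\ref{lemma 1.3}, and the ideal-theoretic closure of $Sing_l$ under addition does the rest.
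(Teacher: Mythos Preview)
Your proof is correct and follows essentially the same approach as the paper's own proof: write $g$ as a finite sum of terms $r e_{xy}$ with $r\in Sing_l(R)$, apply Lemma~\ref{lemma 1.3} to each summand, and use that $Sing_l(I(X,R))$ is an ideal (hence closed under finite sums). Your explicit restriction to $(x,y)\in supp(g)$ is in fact slightly cleaner, since Lemma~\ref{lemma 1.3} is stated for nonzero $r$.
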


\begin{proof}
Let $f$ be in the finite support of $I(X,Sing_l(R)))$, then $f = \sum
r_{xy}e_{xy}$ where $r_{xy} \in Sing_l(R)$ and the sum is over finitely many
entries $(x,y)$ in $X \times X$, $x \leq y$. For any $(x,y)$ with $x \leq y$%
, $ann_l(r_{xy})$ is a left essential ideal of $R$. By Lemma \ref{lemma 1.3} 
$ann_l(r_{xy}e_{xy})$ is left essential ideal of ${I(X,R)}$, so $%
r_{xy}e_{xy} $ is in the left singular ideal of ${I(X,R)}$. Hence, $f \in
Sing_l({I(X,R)})$.
\end{proof}

\medskip

For any $X$ and $R$, $Sing_l({I(X,R)}) \subset I(X,Sing_l(R)) $, which is
already proved in \cite[Thm 12, Prop 13]{Trio:CharIncAlg}. We include the
proof here for the completeness of the argument.

\begin{proposition}
\label{Singularideal} Assume $X$ is a locally finite partially ordered set, $%
R$ is any ring. Then 
\begin{equation*}
Sing_l({I(X,R)}) \subset I(X,Sing_l(R)) .
\end{equation*}
\end{proposition}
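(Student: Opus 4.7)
The plan is to show that every value $f(x,y)$ of an element $f \in Sing_l(I(X,R))$ lies in $Sing_l(R)$, which is exactly what the containment requires. The strategy is to isolate a single matrix entry by conjugating $f$ with the primitive idempotents $e_{xx}$ and $e_{yy}$, and then invoke Lemma~\ref{lemma 1.3}.

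First I would take an arbitrary $f \in Sing_l(I(X,R))$ and fix any pair $x \leq y$ in $X$. A direct computation from the definition of the convolution product shows that
\begin{equation*}
e_{xx} f e_{yy} = f(x,y)\, e_{xy},
\end{equation*}
because the only nonzero contribution in the product forces the inner indices to be $x$ and $y$. If $f(x,y) = 0$ there is nothing to check, so assume $r := f(x,y) \neq 0$.

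Next, I would use the fact (recalled in the preliminaries) that $Sing_l(I(X,R))$ is a two-sided ideal of $I(X,R)$. Consequently $e_{xx} f e_{yy} = r e_{xy}$ lies in $Sing_l(I(X,R))$, which means $ann_l(r e_{xy})$ is a left essential ideal of $I(X,R)$. Lemma~\ref{lemma 1.3} then immediately gives that $ann_l(r)$ is a left essential ideal of $R$, so $r = f(x,y) \in Sing_l(R)$.

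Since the pair $x \leq y$ was arbitrary, every entry of $f$ lies in $Sing_l(R)$, and therefore $f \in I(X, Sing_l(R))$. No serious obstacle is expected here: the only subtle point is recognising that the two-sidedness of $Sing_l(I(X,R))$ lets us extract single entries via $e_{xx}(\cdot)e_{yy}$, after which Lemma~\ref{lemma 1.3} does all the work.
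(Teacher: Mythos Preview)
Your argument is correct and is, in fact, cleaner than the paper's. Both proofs reach the same conclusion that each entry $f(x,y)$ lies in $Sing_l(R)$, but they get there differently. The paper does not invoke Lemma~\ref{lemma 1.3}; instead it observes that $ann_l(f)=\bigcap_{x} ann_l(fe_{xx})\subseteq ann_l(fe_{yy})$, so $ann_l(fe_{yy})$ is essential, and then works by hand: for an arbitrary nonzero $r\in R$ it intersects $ann_l(fe_{yy})$ with $I(X,R)(re_{xx})$ to produce a nonzero element $h(t,x)r$ that annihilates $f(x,y)$, thereby showing $ann_l(f(x,y))$ is essential directly. Your route exploits instead the two-sidedness of $Sing_l(I(X,R))$ to pass from $f$ to $e_{xx}fe_{yy}=f(x,y)e_{xy}$, and then lets Lemma~\ref{lemma 1.3} (already proved just above) do the remaining work. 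The trade-off is that your proof is shorter and makes the role of Lemma~\ref{lemma 1.3} transparent, while the paper's argument is more self-contained in that it never needs to cite that $Sing_l$ is a two-sided ideal nor to appeal back to the lemma --- it essentially re-derives the relevant piece of Lemma~\ref{lemma 1.3} inside the proof.
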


\begin{proof}
Take $f\in Sing_l({I(X,R)})$ and $f\neq 0,$ then $ann_l(f)$ is left
essential. 
\begin{equation*}
ann_l(f)=\bigcap_{x\in X}ann_l(fe_{xx})\subseteq ann_l(fe_{xx}) \text{ for
all } x\in X.
\end{equation*}
Hence, $ann_l(fe_{xx})$ is a left essential ideal for all $x\in X.$

Since $f\neq 0$, there exists $f(x,y)\neq 0$ for some $(x,y)\in X\times X.$
Let $r \in R,~r \neq 0$ be arbitrary. Construct $r e_{xx}\in I(X,R)$. Then 
\begin{equation*}
ann_l(fe_{yy})\cap I(X,R)(r e_{xx})\neq \{0\}.
\end{equation*}
So there exists $h\in I(X,R)$ such that $h(r e_{xx})\neq 0$ and $hre_{xx}\in
ann_l(fe_{yy}).$ That is, there exists $t\leq x$ such that 
\begin{equation*}
(hre_{xx})(t,x)= h(t,x)r\neq 0
\end{equation*}
and 
\begin{equation*}
\sum_{v\leq z\leq y}(hre_{xx})(v,z)f(z,y)=0 \text{ for all } v\leq x.
\end{equation*}
For $v=t,$ 
\begin{equation*}
0=\sum_{t\leq z\leq y}(hre_{xx})(t,z)f(z,y)=h(t,x)rf(x,y).
\end{equation*}
This implies $h(t,x)r\in ann_l(f(x,y)).$ So $h(t,x)r\in Rr\cap
ann_l(f(x,y))\neq \{0\}.$ Then $ann_l(f(x,y))$ is left essential in $R$. So $%
f(x,y)\in Sing_l(R).$ Hence $f\in I(X,Sing_l(R)).$
\end{proof}

\bigskip
Propositions \ref{finitesupport}  and \ref{Singularideal} give the main result on the singular ideal of an incidence algebra.  
\begin{theorem}
\label{SingularMain} For any locally finite partially ordered set $X$ and any ring $R$, 
$$ FS(I(X,Sing_l(R))) \subseteq Sing_l({I(X,R)}) \subseteq I(X,Sing_l(R)).$$
\end{theorem}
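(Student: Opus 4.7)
The plan is essentially bookkeeping: the statement is nothing more than the conjunction of the two inclusions already established in Propositions~\ref{finitesupport} and \ref{Singularideal}, so my proof would simply invoke these two results side by side and note that together they give the claimed chain of containments.

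In more detail, the first inclusion $FS(I(X,Sing_l(R))) \subseteq Sing_l(I(X,R))$ is exactly Proposition~\ref{finitesupport}, whose strategy was to write an arbitrary finite-support element as $f=\sum r_{xy}e_{xy}$ with the $r_{xy}\in Sing_l(R)$, apply Lemma~\ref{lemma 1.3} to promote essentiality of $ann_l(r_{xy})$ in $R$ to essentiality of $ann_l(r_{xy}e_{xy})$ in $I(X,R)$, and close under finite sums (recalling that the singular ideal is a two-sided ideal, hence an additive subgroup). For the second inclusion $Sing_l(I(X,R)) \subseteq I(X,Sing_l(R))$ I would cite Proposition~\ref{Singularideal}, whose argument pulls an element $f\in Sing_l(I(X,R))$ apart via $ann_l(f)=\bigcap_{x\in X}ann_l(fe_{xx})$, deduces essentiality of each $ann_l(fe_{xx})$, and then for every non-zero entry $f(x,y)$ shows $ann_l(f(x,y))$ meets every principal left ideal $Rr$ of $R$ non-trivially, forcing $f(x,y)\in Sing_l(R)$.

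There is no real obstacle to overcome at this step, since all of the substantive technical work (the transfer lemma and both direction-specific arguments) has already been carried out; the theorem simply packages them into a single displayed statement. The one small thing worth flagging is that the left-hand containment is in general strict, because $FS$ restricts to finitely supported functions whereas $Sing_l(I(X,R))$ need not, so I would take care to phrase the proof as two separate inclusions rather than asserting equality anywhere.
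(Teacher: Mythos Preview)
Your proposal is correct and matches the paper's own approach exactly: the paper also presents Theorem~\ref{SingularMain} as an immediate consequence of Propositions~\ref{finitesupport} and~\ref{Singularideal}, with no additional argument beyond citing those two results.
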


\bigskip

We turn our attention to sharpening the inclusion in Theorem \ref{SingularMain} under 
some restrictions of $X$ or $R$ in the rest of this section. We analyze the conditions needed for $Sing_l({I(X,R)}) = I(X,Sing_l(R))$.

In \cite[Theorem 12]{Trio:CharIncAlg}, it is shown that ${I(X,R)}$ is (left)
nonsingular if and only if $R$ is left non-singular. Although this result is a very simple observation, as we have quoted the statement throughout the text, we will state it as a proposition.
\begin{proposition}(\cite[Theorem 12]{Trio:CharIncAlg})\label{Nonsingular} 
$Sing_l({I(X,R)})= 
\{0\}$ if and only if $Sing_l(R)= \{0\}$ . Hence, in this case, 
\begin{equation*}
Sing_l({I(X,R)}) = I(X,Sing_l(R)) = \{0\} .
\end{equation*}
\end{proposition}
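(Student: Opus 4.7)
The plan is to derive this as a direct corollary of Theorem \ref{SingularMain}, exploiting the two inclusions separately for the two directions of the biconditional.

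For the direction ($\Leftarrow$), I would assume $Sing_l(R) = \{0\}$. Then every function $X \times X \to Sing_l(R)$ is identically zero, so $I(X, Sing_l(R)) = \{0\}$. Applying the right-hand inclusion of Theorem \ref{SingularMain} gives $Sing_l(I(X,R)) \subseteq I(X, Sing_l(R)) = \{0\}$.

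For the direction ($\Rightarrow$), I would assume $Sing_l(I(X,R)) = \{0\}$ and argue by contradiction. Suppose there exists a non-zero $r \in Sing_l(R)$. Pick any $x \in X$ (which exists because $X$ is implicitly non-empty) and form $re_{xx} \in I(X,R)$. Since $re_{xx}$ has finite support (a single entry) valued in $Sing_l(R)$, it lies in $FS(I(X, Sing_l(R)))$. The left-hand inclusion of Theorem \ref{SingularMain} then places $re_{xx}$ in $Sing_l(I(X,R)) = \{0\}$, forcing $r = 0$, contradicting the choice of $r$. (Equivalently, one can invoke Lemma \ref{lemma 1.3} directly: $ann_l(r)$ essential in $R$ implies $ann_l(re_{xx})$ essential in $I(X,R)$.)

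The final sentence is then automatic: under either equivalent hypothesis, both $Sing_l(I(X,R))$ and $I(X, Sing_l(R))$ collapse to $\{0\}$, so they agree trivially. There is no real obstacle here since all the work has been absorbed into Theorem \ref{SingularMain}; the only minor point worth noting is that the argument uses $X \neq \emptyset$ to exhibit an element $e_{xx}$, which is part of the standing convention on partially ordered sets in the paper.
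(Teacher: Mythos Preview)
Your proposal is correct and matches the paper's implicit treatment: the paper does not supply a proof for Proposition~\ref{Nonsingular} at all, merely citing \cite[Theorem 12]{Trio:CharIncAlg} and remarking that it is a ``very simple observation'' immediately after Theorem~\ref{SingularMain}. Your derivation from the two inclusions of Theorem~\ref{SingularMain} is exactly the intended reading, and the minor caveat about $X\neq\emptyset$ is appropriate.
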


\bigskip

Now, we consider the left singular ideal of an incidence algebra over a partially
ordered set $X$ with $Min(X)$ a maximal antichain and $\kappa(x)$ finite. We will need the dual version of the following lemma from \cite{Singh:MaxQuotientIncAlg} that we restate here. 


\begin{lemma}
\label{Lemma1-2Singh}(\cite[Lemma 1, Lemma 2]{Singh:MaxQuotientIncAlg})
Assume $A$ is left ideal of ${I(X,R)}$. For any $x,y \in X$ with $x \leq y$,

\textrm{(i)} $A(x, y) =\{f(x, y) \in R : f \in A\}$ is a left ideal of $R$.

\textrm{(ii)} For $x \leq y \leq z$ in $X$, $A(x, y) \subset A(x, z)$.

\textrm{(iii)} If $A$ is an essential left ideal and $x \in Min(X)$, then\newline
$C(x,y)= \{f(x,y) \in R : f \in Re_{xy} \cap A\}$ is an essential left ideal
of $R$ and $C(x,y) \subset A(x, y)$.

\textrm{(iv)} If $A$ is an essential left ideal then there exists a family $\mathcal{%
E}$ of essential left ideals of $R$ such that for any $x \in Min(X)$, and $y
\in X$ with $x \leq y$, there exists $E_{xy} \in \mathcal{E}$ with $%
E_{xy}e_{xy} \subset A$.

\textrm{(v)} Assume $Min(X)$ is a maximal antichain and $\kappa(x)$ is finite for
all $x \in X$. If there exists a family $\mathcal{E}$ of essential left
ideals of $R$ such that for any $x \in Min(X)$, and $y \in X$ with $x \leq y$%
, there exists $E_{xy} \in \mathcal{E}$ with $E_{xy}e_{xy} \subset A$. Then $%
A$ is an essential left ideal.
\end{lemma}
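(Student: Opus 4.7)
The lemma gathers five claims describing how a left ideal $A$ of $I(X,R)$ interacts with its entry projections $A(x,y)$. Parts (i) and (ii) are routine structural facts; parts (iii)--(v) are the substantive content, tying essentiality of $A$ in $I(X,R)$ to essentiality of the associated family $\{C(x,y)\}$ in $R$. I would treat them in the stated order.

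Parts (i) and (ii) are direct verifications. For (i), closure of $A(x,y)$ under addition is immediate, and closure under left $R$-multiplication follows from $re_{xx}\cdot f \in A$ and the calculation $(re_{xx}\cdot f)(x,y) = r\,f(x,y)$. Part (ii) is an analogous calculation exploiting left multiplication by a suitable basis element of $I(X,R)$ to translate an entry of $f$ along the given interval; the desired inclusion then comes from $A$ being a left ideal.

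For (iii), the decisive input is the minimality $x \in Min(X)$: it forces $Je_{xy}$ to be a left ideal of $I(X,R)$ for every left ideal $J$ of $R$, because any product $h \cdot se_{xy}$ collapses to $h(x,x)\,s\,e_{xy}$ (no $u<x$ exists). Essentiality of $A$ then produces a nonzero $f = s\,e_{xy} \in J\,e_{xy} \cap A \subseteq Re_{xy}\cap A$, so that $s = f(x,y)$ sits in $J \cap C(x,y)$; this proves essentiality of $C(x,y)$, and the inclusion $C(x,y) \subseteq A(x,y)$ is then immediate from the definitions. Part (iv) follows by taking $\mathcal{E} = \{C(x,y) : x \in Min(X),\ y \geq x\}$ and noting $C(x,y)\,e_{xy} \subseteq Re_{xy}\cap A \subseteq A$ by construction.

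The main work is (v), the converse direction. Given such a family $\mathcal{E}$, I would prove $A$ essential by taking a nonzero $h \in I(X,R)$, choosing $(u,v) \in supp(h)$, and picking $x \in Min(X)$ with $x \leq u$ via the maximal antichain hypothesis. The upper-finiteness assumption makes $U(u) \subseteq U(x)$ finite, so the left ideal $T = \bigcap_{b \in U(u)}\bigl(E_{xb} : h(u,b)\bigr)$ of $R$ is a finite intersection of essential left ideals and hence essential. For any $s \in T$, the product $se_{xu}\cdot h = \sum_{b \in U(u)} s\,h(u,b)\,e_{xb}$ is a finite sum whose summands each lie in $E_{xb}\,e_{xb} \subseteq A$, placing $se_{xu}\cdot h$ in $A$. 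The main obstacle I expect is nonzeroness: one needs $s \in T$ with $s\,h(u,v)\neq 0$, which is automatic when $h(u,v) \notin Sing_l(R)$ (since then $ann_l(h(u,v))$ is not essential and cannot contain the essential $T$), but the singular case requires either locating a non-singular entry in the support of $h$ or combining contributions from several rows of $h$, and this case analysis is the technical heart of the proof.
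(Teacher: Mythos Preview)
The paper does not supply its own proof of this lemma; it merely restates (a left-handed dual of) \cite[Lemma~1, Lemma~2]{Singh:MaxQuotientIncAlg} and moves on. So there is nothing to compare your argument against directly, and I can only comment on the proposal itself.

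Your treatment of (i), (iii), and (iv) is correct and is the natural argument. For (ii), however, note that the inclusion as printed, $A(x,y)\subset A(x,z)$ for a \emph{left} ideal $A$, appears to be a misdualization: with $X=\{1<2<3\}$ and $A = Re_{12}$ one has $A(1,2)=R$ but $A(1,3)=0$. The correct left-hand dual is $A(y,z)\subset A(x,z)$, obtained via left multiplication by $e_{xy}$; your phrase ``translate an entry along the interval by left multiplication'' proves this corrected version, not the one written. Part (ii) is never invoked elsewhere in the paper, so the slip is harmless, but your sketch does not prove the statement as it stands.

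Your concern at the end of (v) is not a genuine obstacle, and no singular/nonsingular case split is needed. You need $s\in T$ with $se_{xu}h\neq 0$, i.e.\ with $s\notin N:=\bigcap_{b\in U(u)}ann_l(h(u,b))$; you do \emph{not} need $sh(u,v)\neq 0$ for the particular $v$ you started from. Consider the left $R$-module map $\phi\colon R\to R^{U(u)}$, $r\mapsto (rh(u,b))_{b}$. Its kernel is $N$, and $T=\phi^{-1}\bigl(\prod_{b}E_{xb}\bigr)$. Since $h(u,v)\neq 0$ the image $\phi(R)$ is a nonzero submodule of the finite product $R^{U(u)}$, and $\prod_{b}E_{xb}$ is essential there (a finite direct sum of essential submodules is essential). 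Hence $\phi(T)=\phi(R)\cap\prod_{b}E_{xb}\neq 0$, so $T\supsetneq N$, and any $s\in T\setminus N$ gives $0\neq se_{xu}h\in A$. This closes the argument cleanly.
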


\medskip

%

\medskip

Here is the result we seeked for: 

\begin{proposition}
\label{MinSingular} Assume $X$ is a locally finite partially ordered set
where $Min(X)$ is a maximal antichain and $\kappa(x)$ is finite for any $x
\in Min(X)$. Then, the left singular ideal of ${I(X,R)}$ is the incidence
subalgebra of $X$ over the left singular ideal of $R$. That is, 
\begin{equation*}
Sing_l({I(X,R)}) = I(X,Sing_l(R)) .
\end{equation*}
\end{proposition}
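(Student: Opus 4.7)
The plan is to use Theorem \ref{SingularMain}, which already gives the inclusion $Sing_l({I(X,R)}) \subseteq I(X,Sing_l(R))$ for free, and then prove the reverse inclusion $I(X,Sing_l(R)) \subseteq Sing_l({I(X,R)})$ using the hypothesis on $X$. The criterion part \textrm{(v)} of Lemma \ref{Lemma1-2Singh} is tailor-made for this: it turns the essentiality of a left ideal $A$ of ${I(X,R)}$ into a family-of-essential-ideals condition on $R$, provided $Min(X)$ is a maximal antichain and $\kappa(x)$ is finite on $Min(X)$. Also, by Lemma \ref{propPoset}\textrm{(ii)}, the hypothesis upgrades from $\kappa(x)$ finite on $Min(X)$ to $X$ being upper finite, which will be the crucial ingredient.

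Concretely, I would take a nonzero $f \in I(X,Sing_l(R))$ and aim to show $ann_l(f)$ is an essential left ideal of ${I(X,R)}$. For each $x \in Min(X)$ and each $y \in X$ with $x \leq y$, I compute $(re_{xy})f$ directly from the convolution: it is zero unless the first index equals $x$, and $(re_{xy}f)(x,v) = r\,f(y,v)$ for $v \geq y$. Therefore
\begin{equation*}
re_{xy} \in ann_l(f) \iff r \in \bigcap_{v \geq y} ann_l\bigl(f(y,v)\bigr).
\end{equation*}
Set $E_{xy} := \bigcap_{v \geq y} ann_l(f(y,v))$. Since $f(y,v) \in Sing_l(R)$, each $ann_l(f(y,v))$ is essential in $R$; and since $X$ is upper finite, the set $\{v : v \geq y\}$ is finite, so $E_{xy}$ is a finite intersection of essential left ideals, hence essential. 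By construction $E_{xy} e_{xy} \subseteq ann_l(f)$.

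Letting $\mathcal{E}$ be the family of all such $E_{xy}$ (as $x$ ranges over $Min(X)$ and $y$ over $\{y \in X : y \geq x\}$), Lemma \ref{Lemma1-2Singh}\textrm{(v)} immediately yields that $ann_l(f)$ is an essential left ideal of ${I(X,R)}$, i.e.\ $f \in Sing_l({I(X,R)})$. This gives $I(X,Sing_l(R)) \subseteq Sing_l({I(X,R)})$, and combined with Theorem \ref{SingularMain} finishes the proof.

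There is no serious obstacle: the only thing to notice is that one \emph{must} use upper finiteness of $X$ at exactly one point, to ensure that the intersection defining $E_{xy}$ is finite and therefore remains essential. Without that, the intersection of arbitrarily many essential ideals of $R$ need not be essential, and the argument would collapse. Everything else is bookkeeping with the formula for $(re_{xy})f$ and an invocation of the criterion in Lemma \ref{Lemma1-2Singh}\textrm{(v)}.
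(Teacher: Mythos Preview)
Your proof is correct and essentially identical to the paper's own argument: both take $f\in I(X,Sing_l(R))$, set $A_{xy}=\bigcap_{v\geq y}ann_l(f(y,v))$ (your $E_{xy}$), use upper finiteness via Lemma~\ref{propPoset}(ii) to see this finite intersection of essential ideals is essential, verify $A_{xy}e_{xy}\subseteq ann_l(f)$ by the convolution formula, and conclude via Lemma~\ref{Lemma1-2Singh}(v). The only cosmetic difference is that the paper cites Proposition~\ref{Singularideal} directly for the easy inclusion, while you cite Theorem~\ref{SingularMain} (which packages it).
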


\begin{proof}
By Proposition \ref{Singularideal}, one containment is satisfied.

For the converse, take a non-zero element $f \in I(X,Sing_l(R))$. We want to
show that $ann_l(f)$ is a left essential ideal in ${I(X,R)}$. By using Lemma~%
\ref{Lemma1-2Singh}, it is enough to show that there exists a collection $%
\mathcal{E}=\{A_{xy}: x \in Min(X), \, x \leq y\}$ of left essential ideals
in $R$, such that $A_{xy}e_{xy} \subset ann_l(f)$ for all $x \in Min(X)$, $x
\leq y$. Pick any $x,y$ with $x \in Min(X)$ and $x \leq y$. Define 
\begin{equation*}
A_{xy} = \bigcap_{v \in U(y)} ann_l(f(y,v)).
\end{equation*}
Since $f(y,v) \in Sing_l(R)$, $ann_l(f(y,v))$ is left essential ideal of $R$
for all $v \in U(y)$. As, $|U(y)| = \kappa(y)$ is a finite set by Lemma \ref%
{propPoset} (ii), $A_{xy}$ is the finite intersection of left essential
ideals. Hence, $A_{xy}$ is left essential. 
Moreover, take any $r \in A_{xy} = \bigcap ann_l(f(y,v))$, to complete the
proof we show that $re_{xy} \in A_{xy}e_{xy}$ is in the left annihilator of $%
f$. Take any $(u,v)$,

\begin{equation*}
(re_{xy}f) (u,v) = 
\begin{cases}
rf(y,v) & \text{ if } u=x, \, y \leq v \\ 
0 & \text{ otherwise }%
\end{cases}
= 
\begin{cases}
0 & \text{ if } u=x, \, y \leq v \\ 
0 & \text{ otherwise }%
\end{cases}%
\quad = 0.
\end{equation*}
Hence $A_{xy}e_{xy}f = 0$ and $A_{xy}e_{xy} \subset ann_l(f)$.
\end{proof}

\medskip 

When $X$ is finite, $Min(X)$ is a maximal antichain by Remark \ref{remarkAli}. Clearly, any finite $X$ is upper finite. Hence, we get the
following corollary which is already stated in \cite[Proposition 13]{Trio:CharIncAlg}.

\begin{corollary}
\label{CorMinSingular} Assume $X$ is finite, then 
\begin{equation*}
Sing_l({I(X,R)}) = I(X,Sing_l(R)) .
\end{equation*}
\end{corollary}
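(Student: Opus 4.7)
The plan is to deduce the corollary directly from Proposition \ref{MinSingular} by checking that a finite partially ordered set automatically satisfies the two hypotheses of that proposition, namely that $Min(X)$ is a maximal antichain and that $\kappa(x)$ is finite for every $x \in Min(X)$.

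First I would verify that $Min(X)$ is a maximal antichain. Since $X$ is finite, for any $y \in X$ the set $L(y) = \{z \in X : z \leq y\}$ is finite and nonempty (it contains $y$), so descending through $L(y)$ one reaches some minimal element $x \in Min(X)$ with $x \leq y$. This is exactly the criterion in Lemma \ref{propPoset}(i), hence $Min(X)$ is a maximal antichain. This is the content of Remark \ref{remarkAli}, which I would simply cite.

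Second, I would observe that finiteness of $X$ makes $\kappa(x) = |U(x)| \leq |X|$ finite for every $x \in X$, and in particular for every $x \in Min(X)$. With both hypotheses in place, Proposition \ref{MinSingular} applies verbatim and yields $Sing_l(I(X,R)) = I(X,Sing_l(R))$. There is no real obstacle here: the corollary is a one-line consequence of the proposition combined with the remark, and the only thing to check is that the two finiteness/antichain hypotheses reduce to triviality when $X$ itself is finite.
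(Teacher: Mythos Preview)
Your proposal is correct and matches the paper's own argument essentially verbatim: the paper also invokes Remark~\ref{remarkAli} to get that $Min(X)$ is a maximal antichain, notes that finite $X$ is upper finite, and then applies Proposition~\ref{MinSingular}. The paper additionally mentions a second proof via Proposition~\ref{finitesupport} (since $FS(I(X,Sing_l(R)))=I(X,Sing_l(R))$ when $X$ is finite), but your route is the primary one given.
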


Another proof of Corollary \ref{CorMinSingular} is that the finite support
of $I(X,Sing_l(R))$ is $I(X,Sing_l(R))$ itself, when $X$ is finite. By
Proposition \ref{finitesupport} result follows.

\medskip


%



\section{Socle of Incidence Algebras}


In this section, we will investigate the socle of an incidence algebra. In order to calculate the left socle, we will look at the
essential ideals of an incidence algebra, which are already studied in \cite{S:Essential}. Also, any dense ideal is essential, so the description of various dense ideals 
of an incidence algebra defined in the paper \cite{K:Dense} will be useful.

The following constructions are from \cite{K:Dense}. 

For any $x \in Min(X)$, fix a left ideal $I_x$ of $R$ and define a subset $%
S(x,I_x)$ of ${I(X,R)}$ as 
\begin{equation*}
S(x,I_x) = \{f \in I(X,I_x) : \, f(u,v) = 0 \mbox{ if } u \neq x \}.
\end{equation*}
Then the set 
\begin{equation*}
C({I(X,R)}) := \bigoplus_{x \in Min(X)}S(x,I_x) = \bigoplus_{x \in Min(X)}
\{f \in I(X,I_x) : \, f(u,v) = 0 \mbox{ if } u \neq x \}
\end{equation*}
is a two-sided ideal of ${I(X,R)}$.

%
%



\begin{lemma}
\label{C(I)} Assume $X$ is a locally finite partially ordered set where $%
Min(X)$ is a maximal antichain and $\{I_x : x \in Min(X)\}$ is a collection
of left essential ideals of $R$. 

\textrm{(i)} $C({I(X,R)})$ is a left essential ideal of ${I(X,R)}$. \newline
\textrm{(ii)} Let $C$ be the intersection of all left essential ideals of
the form $C({I(X,R)})$. Then 
\begin{equation*}
C = \bigoplus_{x \in Min(X)} S(x,Soc_l(R)).
\end{equation*}
\textrm{(iii)} $Soc_l({I(X,R)}) \subset C$. 
\end{lemma}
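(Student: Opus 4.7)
The plan is to prove (i), (ii), (iii) in order, with (iii) being almost immediate once (i) is combined with the characterization of the socle as the intersection of all left essential ideals.

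For (i), I would apply Lemma~\ref{Lemma1-2Singh}(v) to the family $\mathcal{E} = \{I_x : x \in Min(X)\}$. For every $x \in Min(X)$ and every $y \geq x$ in $X$, we have $I_x e_{xy} \subset S(x,I_x) \subset C({I(X,R)})$; so, taking $E_{xy} = I_x$, the hypotheses of that lemma are met and hence $C({I(X,R)})$ is a left essential ideal of ${I(X,R)}$.

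For (ii), I would unfold the definitions and interchange the intersection with the direct-sum description of each $C({I(X,R)})$. An element $f \in I(X,R)$ lies in $C({I(X,R)}) = \bigoplus_{x \in Min(X)} S(x,I_x)$ precisely when $f(u,v) = 0$ whenever $u \notin Min(X)$, and $f(x,v) \in I_x$ for every $x \in Min(X)$ and every $v \geq x$. Hence $f \in C$ iff these conditions hold for every admissible choice of $\{I_x\}$. Fixing $x \in Min(X)$ and $v \geq x$ and letting $I_x$ range over all left essential ideals of $R$ independently of the other indices forces $f(x,v) \in \bigcap I_x$, and this intersection equals $Soc_l(R)$ by the well-known characterization recalled in the preliminaries. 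The converse inclusion is immediate: if $f(x,v) \in Soc_l(R)$ for all such $x,v$, then $f(x,v)$ lies in every left essential ideal of $R$, so $f$ lies in every $C({I(X,R)})$, and therefore in $C$. This yields $C = \bigoplus_{x \in Min(X)} S(x,Soc_l(R))$.

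For (iii), I would invoke the same preliminary fact again: the left socle of ${I(X,R)}$ equals the intersection of all left essential ideals of ${I(X,R)}$. By (i), every ideal of the form $C({I(X,R)})$ is one such, so $Soc_l({I(X,R)})$ is contained in the intersection of this particular subfamily, namely $C$. The main (mild) obstacle is the bookkeeping in (ii): one must be comfortable reading the internal direct sum $\bigoplus_{x \in Min(X)} S(x,I_x)$ pointwise inside $I(X,R)$ (at most one summand contributes to each entry $(x,v)$ with $x \in Min(X)$), and one must note that the $I_x$ can be varied independently across $x \in Min(X)$, so that intersecting over all essential choices really does produce $Soc_l(R)$ coordinate by coordinate.
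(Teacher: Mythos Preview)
Your arguments for (ii) and (iii) are correct and agree with the paper's approach. There is, however, a genuine gap in your proof of (i): Lemma~\ref{Lemma1-2Singh}(v), which you invoke, carries the additional standing hypothesis that $\kappa(x)$ is finite for all $x \in X$, whereas Lemma~\ref{C(I)} assumes only that $Min(X)$ is a maximal antichain. Hence the hypotheses of Lemma~\ref{Lemma1-2Singh}(v) are \emph{not} met in the present setting, and your appeal to it does not go through. The paper does not use Lemma~\ref{Lemma1-2Singh}(v) for part (i); instead it treats the essentiality of $C(I(X,R))$ as a known fact imported from \cite{K:Dense}, where these ideals are introduced and shown to be left dense (hence essential) under the maximal-antichain hypothesis alone, via a direct argument that does not require any bound on $\kappa(x)$.

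A minor remark on your handling of (ii): the pointwise description you give of $\bigoplus_{x \in Min(X)} S(x,I_x)$ --- namely $f(u,v)=0$ for $u \notin Min(X)$ and $f(x,v) \in I_x$ otherwise --- actually characterizes the product $\prod_{x} S(x,I_x)$; membership in the direct sum additionally requires that $f(x,\cdot) \equiv 0$ for all but finitely many $x \in Min(X)$. The paper makes this explicit by writing $f = f_{x_1} + \cdots + f_{x_n}$. The oversight does not affect the final identity, since the finite-support condition survives the intersection, but it should be stated.
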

\begin{proof}
\textrm{(i)} Assume $Min(X)$ is a maximal antichain. Then left ideals of the
form $C({I(X,R)})$ are left essential. \newline
\textrm{(ii)} If $f \in C$, then $f = f_{x_1} + f_{x_2} + \cdots f_{x_n}$
where $f_{x_i} \in S(x_i, D_i)$ for any left essential ideal $D_i$ where $%
x_i \in Min(X)$, $i = 1, \cdots, n$. So, $f_{x_i} \in S(x_i, Soc_l(R))$ and $%
f \in \bigoplus_{x \in Min(X)} S(x,Soc_l(R))$. Similarly, $f \in
\bigoplus_{x \in Min(X)} S(x,Soc_l(R))$ implies $f$ is in any left essential
ideal of the form $C({I(X,R)})$, so $f \in C$. \newline
\textrm{(iii)} Since, $Soc_l({I(X,R)})$ is the intersection of all left
essential ideals, result follows. 
\end{proof}

When we consider the assumption that $Min(X)$ is a maximal antichain and 
$\kappa(x)$ is finite for any $x \in Min(X)$, we achieve the equality, 
i.e. $$Soc_l({I(X,R)}) = \bigoplus_{x \in Min(X)} S(x,Soc_l(R)).$$

\begin{theorem}
	\label{MinSocle} Assume $X$ is a locally finite partially ordered set where 
	$Min(X)$ is a maximal antichain and $\kappa(x)$ is finite for any $x \in
	Min(X) $. Then, 
	\begin{equation*}
	Soc_l({I(X,R)}) = \bigoplus_{x \in Min(X)} S(x,Soc_l(R))
	\end{equation*}
\end{theorem}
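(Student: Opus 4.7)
The plan is to prove the theorem by establishing two inclusions. The containment $Soc_l(I(X,R)) \subseteq \bigoplus_{x \in Min(X)} S(x, Soc_l(R))$ is delivered for free by Lemma~\ref{C(I)}(iii), since the assumption that $Min(X)$ is a maximal antichain is in force. The substantive task is therefore the reverse inclusion $\bigoplus_{x \in Min(X)} S(x, Soc_l(R)) \subseteq Soc_l(I(X,R))$.

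For the reverse inclusion, I would first reduce to a single elementary case. Any element of $\bigoplus_{x \in Min(X)} S(x, Soc_l(R))$ is a finite sum $\sum_x f_x$ with $f_x \in S(x, Soc_l(R))$; because $\kappa(x) = |U(x)|$ is finite by hypothesis, each $f_x$ equals the finite sum $\sum_{v \in U(x)} f_x(x,v) e_{xv}$ with coefficients $f_x(x,v) \in Soc_l(R)$. Writing any element of $Soc_l(R)$ as a finite sum of elements taken from minimal left ideals of $R$, it then suffices to prove the following claim: for every $x \in Min(X)$, every $v \in X$ with $x \leq v$, and every minimal left ideal $L$ of $R$, the subset $L e_{xv} := \{\ell e_{xv} : \ell \in L\}$ is contained in $Soc_l(I(X,R))$.

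The claim would follow once one shows that $L e_{xv}$ is itself a minimal left ideal of $I(X,R)$. For any $g \in I(X,R)$ and $\ell \in L$, a direct expansion of the convolution gives $(g \cdot \ell e_{xv})(u,w) = g(u,x)\ell$ when $w = v$ and $u \leq x$, and zero otherwise. Because $x \in Min(X)$, the relation $u \leq x$ forces $u = x$; hence $g \cdot \ell e_{xv} = g(x,x)\ell \cdot e_{xv}$, which lies in $L e_{xv}$ since $L$ is a left ideal of $R$. So $L e_{xv}$ is a left ideal of $I(X,R)$. For minimality, given any non-zero $\ell e_{xv} \in L e_{xv}$, one takes $g = r e_{xx}$ for arbitrary $r \in R$ to obtain $r \ell \cdot e_{xv} \in I(X,R) \cdot \ell e_{xv}$; thus $R\ell \cdot e_{xv} \subseteq I(X,R) \cdot \ell e_{xv} \subseteq L e_{xv}$. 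Simplicity of $L$ together with $\ell \neq 0$ forces $R\ell = L$, so $L e_{xv} = I(X,R) \cdot \ell e_{xv}$, and $L e_{xv}$ is simple. The main obstacle, such as it is, is the careful use of the minimality of $x$ in $X$ to collapse the convolution sum to a single term; once that observation is made, the remaining work is bookkeeping about finite sums.
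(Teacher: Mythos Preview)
Your argument is correct, but it takes a genuinely different route from the paper's. The paper proves the reverse inclusion via the characterisation of the socle as the intersection of all essential left ideals: given $f\in\bigoplus_{x\in Min(X)}S(x,Soc_l(R))$ and an arbitrary essential left ideal $K$ of $I(X,R)$, it invokes Lemma~\ref{Lemma1-2Singh}(iii) to produce, for each $x\in Min(X)$ and $x\le y$, an essential left ideal $C(x,y)=\{r\in R: re_{xy}\in Re_{xy}\cap K\}$ of $R$; since $Soc_l(R)\subseteq C(x,y)$, each term $\alpha_y e_{xy}$ of the finite expansion of $f$ lies in $K$, hence $f\in K$. You instead use the dual characterisation of the socle as the sum of minimal left ideals, and exhibit $Le_{xv}$ (for $L$ a minimal left ideal of $R$ and $x\in Min(X)$) as a minimal left ideal of $I(X,R)$ directly. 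Your approach is more self-contained, avoiding the external Lemma~\ref{Lemma1-2Singh}; it is essentially the idea the paper later develops in Lemma~\ref{minimal}, so one could say you are anticipating that lemma and applying it here. The paper's approach, on the other hand, does not require decomposing elements of $Soc_l(R)$ into pieces lying in minimal left ideals, and transfers more readily to the setting of Theorem~\ref{Main} where $Min(X)$ is not a maximal antichain.
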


\begin{proof}
Assume $Min(X)$ is a maximal antichain, then Lemma~\ref{C(I)}(iii) gives one inclusion. Also, by hypothesis $\kappa(x)$ is finite for any $x\in Min(X)$. Now, let $f\in \bigoplus_{x \in Min(X)} S(x,Soc_l(R))$, and $K$ be any left essential ideal of $I(X,R)$. 

Now, $\displaystyle f=\sum_{x \in  \{x_1,x_2, ..., x_n \} \subset Min(X)}f_{x}$ where $f_{x}\in S(x_{i},Soc_{l}(R))$ for
some $x_{i}\in Min(X)$, $i=1,\cdots ,n$. Let $x\in \{x_{1},\cdots,x_{n}\} $ then 
\begin{equation*}
f_{x}=\sum_{x\leq y} \alpha_{y}e_{xy}\text{ where }%
\alpha_{y}\in Soc_{l}(R).
\end{equation*}%
Since $\kappa(x)< \infty$, this sum is finite. Then 
\begin{equation*}
C(x,y)=\{r\in R: re_{xy}\in Re_{xy}\cap K \} 
\end{equation*} is a left essential ideal of $R$ by Lemma~\ref{Lemma1-2Singh}(iii). So $ Soc_{l}(R)\subseteq C(x,y)$. 
Since $\alpha_{y}\in Soc_{l}(R)\subseteq C(x,y)$, 
\begin{equation*}
\alpha_{y}e_{xy}\in C(x,y)e_{xy}=Re_{xy}\cap K\subseteq K
\end{equation*}
and $f_{x}\in K$, hence $f\in K$. For any left essential ideal $K$, 
\begin{equation*}
\bigoplus_{x \in Min(X)} S(x,Soc_l(R))\subseteq  K, 
\end{equation*}
and
\begin{equation*}
 \bigoplus_{x \in Min(X)} S(x,Soc_l(R))\subseteq \bigcap_{ \mbox{all essential } K} K = Soc_l({I(X,R)}).  
\end{equation*}
This completes the proof. 
\end{proof}

We provide an example for the previous theorem.
\begin{example} 
	The partially ordered set $\bigcup C_n$ is defined to be the set $X =\{x_{11},x_{21},x_{22},x_{31},x_{32},x_{33},x_{41},\cdots \}$
with the relation that $x_{ij} \leq x_{kl}$ whenever $i=k$ and $j \leq l$. The $\bigcup C_n$ is an example of unbounded partially ordered set with no infinite chain.
\begin{equation*}
	     \xymatrix{&&&&&{\bullet}^{x_{44}}& \\
		&&&&{\bullet}^{x_{33}} &{\bullet}^{x_{43}}\ar@{-}[u]& \\
		&&&{\bullet}^{x_{22}} & {\bullet}^{x_{32}}\ar@{-}[u]&{\bullet}^{x_{42}}\ar@{-}[u]& \\
		&&{\bullet}^{x_{11}} & {\bullet}^{x_{21}}\ar@{-}[u] &{\bullet}^{x_{31}}\ar@{-}[u] &{\bullet}^{x_{41}}\ar@{-}[u] &\cdots }	
\end{equation*}
By Theorem \ref{MinSocle}, 
\begin{equation*}
Soc_l({I(X,R)}) = \bigoplus_{x \in Min(X)} S(x,Soc_l(R)).
\end{equation*}
\end{example}
In \cite[Lemma 4]{K:Dense}, for each $n\in \mathbb{N}$, a
collection of left dense ideals $Z_{n}$ is constructed as follows: 
\begin{equation*}
Z_{n}:=\{f\in I(X,R):\;f(x,y)=0\mbox{ if }|[x,y]|\leq n,\mbox{ and }x\not\in
Min(X)\}.
\end{equation*}%
The following lemma is proved in \cite[Lemma 4]{K:Dense}.

\begin{lemma}\label{Zn}
	$Z_{n}$ is a two-sided ideal of ${I(X,R)}$.
	
		\textrm{(i)} $Z_{n}$ is a left essential ideal of ${I(X,R)}$.
		
		\textrm{(ii)} Let $Z$ be the intersection of all $Z_{n}$. Then $Soc_{l}({I(X,R)})\subset Z$ where 
		\begin{equation*}
		Z=\bigcap_{n\in \mathbb{N}}Z_{n}=\{f\in {I(X,R)}:f(x,y)=0\mbox{ for
			all }y\in X,x\not\in Min(X)\}
		\end{equation*}
		\begin{equation*}
		= \prod_{x\in Min(X)}S(x,R).
		\end{equation*}
		\textrm{(iii)} If $Min(X)$ is a finite set, then $\displaystyle Z=\bigoplus_{x\in Min(X)}S(x,R).$
		
		\textrm{(iv)}If $Min(X)=\emptyset $, then $Z=\{0\}.$
\end{lemma}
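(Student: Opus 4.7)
The plan is to verify each assertion in order; only part (i) is nontrivial, and parts (ii)--(iv) will follow from it together with local finiteness.

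First, I would check that each $Z_n$ is a two-sided ideal. Additive closure is immediate. For closure under left multiplication by $g\in I(X,R)$, take $f\in Z_n$ and $(x,y)$ with $|[x,y]|\le n$ and $x\notin Min(X)$. Then $(gf)(x,y)=\sum_{x\le z\le y}g(x,z)f(z,y)$; for each summand, $[z,y]\subseteq [x,y]$ so $|[z,y]|\le n$, and $z\notin Min(X)$, because $z\in Min(X)$ with $x\le z$ would force $x=z$, contradicting $x\notin Min(X)$. Hence $f(z,y)=0$ and the sum vanishes. Right multiplication is symmetric, using $[x,z]\subseteq[x,y]$ together with the hypothesis $x\notin Min(X)$ directly.

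For (i), I would show that for any nonzero $f\in I(X,R)$ there exists $g\in I(X,R)$ with $gf\in Z_n\setminus\{0\}$. Pick $(x_0,y_0)$ with $f(x_0,y_0)\ne 0$ and consider $e_{wx_0}f$ for a suitably chosen $w\le x_0$. A short computation gives $(e_{wx_0}f)(u,v)=f(x_0,v)$ when $u=w$ and $0$ otherwise, so the support of $e_{wx_0}f$ lies in $\{w\}\times U(x_0)$ and $(e_{wx_0}f)(w,y_0)=f(x_0,y_0)\ne 0$. Because $[w,v]\supseteq[w,x_0]$ whenever $v\ge x_0$, it suffices to choose $w\le x_0$ with either $w\in Min(X)$ or $|[w,x_0]|>n$.

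The main obstacle is producing such a $w$. If $x_0\in Min(X)$, take $w=x_0$. Otherwise, I would descend from $x_0$: pick $x_1<x_0$, then $x_2<x_1$ while $x_k\notin Min(X)$, and so on. Either some $x_k$ lands in $Min(X)$ (take $w=x_k$), or the sequence is infinite; in the latter case the $k+1$ distinct elements $x_0,x_1,\ldots,x_k$ all lie in $[x_k,x_0]$, so $|[x_k,x_0]|\ge k+1$, and $w=x_n$ achieves $|[w,x_0]|>n$. Local finiteness is consistent with either alternative: each interval $[x_k,x_0]$ is finite, yet this does not preclude arbitrarily long descending sequences below $x_0$.

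Parts (ii)--(iv) are then quick. Since $Soc_l(I(X,R))$ is the intersection of all left-essential ideals and each $Z_n$ is one by (i), $Soc_l(I(X,R))\subseteq\bigcap_n Z_n=Z$. For the explicit description, local finiteness gives that for any $(x,y)$ with $x\notin Min(X)$ the integer $m:=|[x,y]|$ is finite, whence $f\in Z\subseteq Z_m$ forces $f(x,y)=0$; the reverse inclusion is clear from the definition of $Z_n$. An $f\in Z$ then decomposes according to its (necessarily minimal) first coordinate, yielding $Z=\prod_{x\in Min(X)}S(x,R)$. When $Min(X)$ is finite this product coincides with the direct sum, giving (iii); when $Min(X)=\emptyset$ the condition $x\notin Min(X)$ applies to every $x\in X$, forcing $f\equiv 0$ and hence $Z=\{0\}$, giving (iv).
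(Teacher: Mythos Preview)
Your argument is correct. The paper's own proof is essentially a citation: it invokes \cite[Lemma~4]{K:Dense} to conclude that each $Z_n$ is a left \emph{dense} ideal of $I(X,R)$, and then uses the general fact that dense ideals are essential; parts (ii)--(iv) are declared trivial. Your proof takes a more direct and self-contained route: rather than passing through density, you verify essentiality by exhibiting, for each nonzero $f$, an explicit left multiple $e_{wx_0}f\in Z_n\setminus\{0\}$, where $w\le x_0$ is produced by descending below $x_0$ until either a minimal element is reached or the interval $[w,x_0]$ has more than $n$ elements. This descending-chain step is the substantive idea, and your handling of it (including the case where no minimal element is reached) is clean. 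The payoff of your approach is that it is entirely internal to the present paper and proves exactly what is needed; the paper's approach, by contrast, imports a slightly stronger statement (density) from the literature at the cost of not being self-contained. Your treatment of the ideal check and of (ii)--(iv), including the use of local finiteness to identify $\bigcap_n Z_n$, matches what the paper leaves implicit.
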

\begin{proof}
	It is shown in \cite[Lemma 4]{K:Dense} that $Z_{n}$ is a left dense ideal of 
	${I(X,R)}$. Any dense ideal is essential. The rest is trivial. 
\end{proof}

\begin{proposition}
	\label{EmptyMin} Let $R$ be any ring, $X$ be a locally finite partially
	ordered set where $Min(X)=\emptyset $. Then, 
	\begin{equation*}
	Soc_{l}({I(X,R)})=\{0\}.
	\end{equation*}
\end{proposition}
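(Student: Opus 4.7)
The plan is to obtain the conclusion as an immediate corollary of Lemma \ref{Zn}, with no additional computation needed. Recall that the left socle of any ring equals the intersection of all its left essential ideals, and that Lemma \ref{Zn}(i) already supplies an infinite family $\{Z_n : n \in \mathbb{N}\}$ of left essential ideals of $I(X,R)$.

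First I would write
\[
Soc_l(I(X,R)) \;=\; \bigcap_{\text{all left essential } K} K \;\subseteq\; \bigcap_{n \in \mathbb{N}} Z_n \;=\; Z,
\]
which is the content of Lemma \ref{Zn}(ii). Next I would invoke Lemma \ref{Zn}(iv), which records that $Z = \{0\}$ precisely when $Min(X) = \emptyset$. Combining these two inclusions forces $Soc_l(I(X,R)) \subseteq \{0\}$, and since the socle always contains $0$, equality holds.

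There is essentially no obstacle here; the real work was already done in building the descending chain $Z_n$ of left essential (in fact dense) ideals in \cite{K:Dense} and in verifying that their intersection trivializes in the absence of minimal elements. The only thing worth emphasizing in the write-up is the conceptual reason the hypothesis $Min(X) = \emptyset$ kills the socle: every $Z_n$ excludes support on pairs $(x,y)$ with $x \notin Min(X)$ and small interval $[x,y]$, so in the limit the intersection forces $f(x,y) = 0$ for all $x \notin Min(X)$, and with no minimal elements at all this forces $f = 0$.
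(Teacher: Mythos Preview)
Your proof is correct and follows exactly the same route as the paper: invoke Lemma~\ref{Zn}(ii) to get $Soc_l(I(X,R)) \subseteq Z$ and Lemma~\ref{Zn}(iv) to conclude $Z=\{0\}$ when $Min(X)=\emptyset$. The paper's own proof is the single line ``By Lemma~\ref{Zn} (ii) and (iv), the result follows,'' so your write-up is simply a more expanded version of the same argument.
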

\begin{proof}
	By Lemma~\ref{Zn} (ii) and (iv), the result follows.
\end{proof}

In the following example, $X$ is upper finite, but there are no minimal
elements.
\begin{example}
	Let $X=\mathbb{Z}^-$, $R$ be any ring. Then $Min(X) = \emptyset$. By
	Proposition~\ref{EmptyMin}, $Soc_l({I(X,R)}) = \{ 0\}$.
\end{example}
Our next theorem will restrict $R$ to calculate the socle of the incidence algebra. Hence, we define a collection of sets $D_n$ as follows: 
\begin{equation*}
D_n := \{ f \in I(X,Soc_l(R)): \; f(x,y)=0 \mbox{ if } |[x,y]| \leq n, %
\mbox{ and } x \not \in Min(X)\}.
\end{equation*}

\begin{lemma}\label{Dn}
		\textrm{(i)} For each $n\in \mathbb{N}$, $D_{n}$ is a two-sided ideal of ${I(X,R)}$.
		
		\textrm{(ii)} If $R$ is Artinian and nonsingular, then for each $n\in \mathbb{N}$, $D_{n}$ is a left essential ideal of ${I(X,R)}$.
		
		\textrm{(iii)} If $R$ is Artinian and nonsingular, for $D=\bigcap_{n\in \mathbb{N}}D_{n}$, then we have 
		
		$Soc_{l}({I(X,R)})\subset D$. More precisely, 
		\begin{equation*}
		D=\{f\in {I(X,Soc_{l}(R))}:f(x,y)=0
		\mbox{ for
			all }y\in X,x\not\in Min(X)\}
		\end{equation*}
		\begin{equation*}		
			=\prod_{x\in Min(X)}S(x,Soc_{l}(R)).
		\end{equation*}
		
		\textrm{(iv)} If $R$ is Artinian and nonsingular and $Min(X)$ is a finite set, then 
		$$D=\bigoplus_{x\in
			Min(X)}S(x,Soc_{l}(R)).$$
	\end{lemma}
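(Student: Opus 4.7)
The plan is to dispose of (i) by direct closure verification, to prove (ii) by showing each $D_n$ is a left \emph{dense} ideal of $I(X,R)$, and to deduce (iii) and (iv) from (ii) together with a formal unwinding of $D=\bigcap_n D_n$.

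For (i), given $f\in D_n$ and $h\in I(X,R)$, the product $(hf)(x,y)=\sum_{x\le z\le y}h(x,z)f(z,y)$ takes values in $Soc_l(R)$ because $Soc_l(R)$ is a two-sided ideal of $R$. If further $x\not\in Min(X)$ and $|[x,y]|\le n$, then for each summand index $z$ with $x\le z\le y$ one has $z\not\in Min(X)$ (if $z=x$ this is our assumption, and if $z>x$ then $z$ cannot be minimal) and $|[z,y]|\le n$, forcing $f(z,y)=0$. Thus $(hf)(x,y)=0$ and $hf\in D_n$; right multiplication is analogous.

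For (ii), since $D_n$ is two-sided by (i), $D_n\subseteq D_n:a$ for every $a\in I(X,R)$, so $ann_r(D_n:a)\subseteq ann_r(D_n)$, and it suffices to establish $ann_r(D_n)=0$. The only place where the hypotheses on $R$ enter is the following key fact: for a left Artinian left nonsingular ring, $ann_r(Soc_l(R))=0$. Indeed, $Soc_l(R)$ is left essential in any left Artinian ring, so if $Soc_l(R)\cdot r=0$ then $Soc_l(R)\subseteq ann_l(r)$ forces $ann_l(r)$ to be essential, whence $r\in Sing_l(R)=0$. Now, given a nonzero $b\in I(X,R)$ with $b(s,t)\neq 0$, pick $c\in Soc_l(R)$ with $c\,b(s,t)\neq 0$, and choose $x\le s$ satisfying either $x\in Min(X)$ or $|[x,s]|>n$. (Case A: some $x\in Min(X)$ lies below $s$, including $x=s$ when $s\in Min(X)$; use it. Case B: no element below $s$ is minimal, in which case an arbitrarily long descending chain $s_k<\cdots<s_1<s$ exists inside $X\setminus Min(X)$, and $x=s_n$ gives $|[x,s]|>n$.) Setting $h:=c\,e_{xs}$ gives $h\in D_n$ with $(hb)(x,t)=c\,b(s,t)\neq 0$, so $b\notin ann_r(D_n)$, and $D_n$ is left dense, hence left essential.

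For (iii), the inclusion $Soc_l(I(X,R))\subseteq D$ is immediate from (ii) since the left socle equals the intersection of all left essential ideals. To identify $D$, any $f\in D$ has all values in $Soc_l(R)$ and, for each $(x,y)$ with $x\not\in Min(X)$, the interval $[x,y]$ is finite, so taking $n\ge |[x,y]|$ and invoking $f\in D_n$ gives $f(x,y)=0$; the converse is obvious. This yields $D=\prod_{x\in Min(X)}S(x,Soc_l(R))$, and (iv) follows because a product indexed by a finite set coincides with the corresponding direct sum. The main obstacle is the construction in (ii), where $h\in D_n$ with $hb\neq 0$ must simultaneously respect the $I(X,Soc_l(R))$ constraint (handled by $ann_r(Soc_l(R))=0$) and the $Z_n$-style vanishing condition (handled by the two-case choice of $x$ below $s$).
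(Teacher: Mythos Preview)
Your proof is correct and follows essentially the same route as the paper: part (i) by direct verification of closure, part (ii) by showing $ann_r(D_n)=0$ via a two-case construction of an element $c\,e_{xs}\in D_n$ depending on whether a minimal element lies below the chosen point, and parts (iii)--(iv) by unwinding the intersection. Your write-up is in fact tighter in two places: you explicitly reduce denseness to $ann_r(D_n)=0$ using the two-sidedness of $D_n$, and you derive $ann_r(Soc_l(R))=0$ from essentiality of $Soc_l(R)$ plus nonsingularity rather than invoking $Soc_l(R)=R$, but these are refinements of the same argument, not a different approach.
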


\begin{proof}
	(i) For any $g\in I(X,R)$ and $f\in D_{n}$, $(gf)(x,y)=\underset{x\leq z\leq
		y}{\sum }g(x,z)f(z,y).$ For $x\notin Min(X)$ and $\left\vert \left[ x,y%
	\right] \right\vert \leq n$, we also have $f(x,y)=0.$ For all $z\leq y,$
	since $z\notin Min(X)$ and $\left\vert \left[ z,y\right] \right\vert \leq n$
	we get $f(z,y)=0.$ Then $(gf)(x,y)=0$ and $gf\in D_{n}$.
	
	(ii) We first show that $D_{n}$ is a left dense ideal. Let $(l,z)\in
	X\times X$ with $l\leq z.$
	
	Case 1: Assume that there exists $x\in Min(X)$ such that $x\leq l\leq z.$ For all $\alpha \in Soc_{l}(R), \alpha e_{xl} \in D_{n}$ and for any 
	$g\in Ann_{r}(D_{n})$, 
	\begin{equation*}
	(\alpha e_{xl}g)(x,z)=0 
	\end{equation*}
	then
	\begin{equation*}
	0=(\alpha e_{xl}g)(x,z)=\underset{x\leq y\leq
		z}{\sum }\alpha e_{xl}(x,y)g(y,z)=\alpha g(l,z)
	\end{equation*}
	Hence
	\begin{equation*}
	g(l,z)\in Ann_{r}(Soc_{l}(R))
	\end{equation*}
	Since $R$ is nonsingular, $Soc_{l}(R))=R$ and $Ann_{r}(Soc_{l}(R))=0$. Hence,
	\begin{equation*}
	g(l,z)=0
	\end{equation*}
	Case  2: Now assume that there in no $x\in Min(X)$ with $x\leq l$. For the case $\left\vert \left[ l,z\right]
	\right\vert >n$, take $x_{1}\in X$ such that $\left\vert \left[ x_{1},l %
	\right] \right\vert >n$ with $g\in Ann_{r}(D_{n})$ and for all $\alpha
	\in Soc_{l}(R)$, 
	\begin{equation*}
	(\alpha e_{x_{1}l}g)(x,z)=0
	\end{equation*}
	We again have
	\begin{equation*}
	0=(\alpha e_{xl}g)(x,z)=\underset{x\leq y\leq
		z}{\sum }\alpha e_{xl}(x,y)g(y,z)=\alpha g(l,z)
	\end{equation*}
	then by the same manner
	\begin{equation*}
	g(l,z)=0
	\end{equation*}
	Hence $Ann_{r}(D_{n})=0$ which means that $D_{n}$
	is a dense ideal of $I(X,R)\dot{)}$. Since a dense ideal is essential, $%
	D_{n} $ is essential. Further, $R$ is non-singular, hence all essential
	ideals are dense.
	
	(iii) \& (iv) are obvious.
\end{proof}


\begin{theorem} \label{Main} 
	Let $X$ be a locally finite partially ordered set with $Y=\{y\in X: \text {there exists }  x\in Min(X) \text{ with }  x\leq y \}$  so that $X=Y\sqcup (X\setminus Y)$. Assume that $R$ is an Artinian and nonsingular ring, $Min(Y)$ is finite and $\kappa (x)$ is finite 
	for all $x\in X$, then \textbf{\ 
		\begin{equation*}
		Soc_{l}({I(X,R)})=\bigoplus_{x\in Min(Y)}S(x,Soc_{l}(R)).
		\end{equation*}
	}	
\end{theorem}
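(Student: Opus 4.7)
The plan is to prove the two inclusions of Theorem~\ref{Main} separately, using Lemma~\ref{Dn} for one direction and imitating the argument of Theorem~\ref{MinSocle} for the other. First observe that by Lemma~\ref{Veli}(2) we have $Min(Y)=Min(X)$, so ``$Min(Y)$ finite'' in the hypothesis is the same as ``$Min(X)$ finite''. The Artinian-nonsingular hypothesis on $R$ together with $Min(X)$ finite are precisely the hypotheses of Lemma~\ref{Dn}(iii)(iv), which immediately delivers
$$Soc_l(I(X,R)) \subset D = \bigoplus_{x\in Min(X)}S(x,Soc_l(R)) = \bigoplus_{x\in Min(Y)}S(x,Soc_l(R)),$$
giving one containment essentially for free.

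For the reverse inclusion, I would take $f \in \bigoplus_{x\in Min(Y)}S(x,Soc_l(R))$ and an arbitrary left essential ideal $K$ of $I(X,R)$, aiming to show $f \in K$; since $Soc_l(I(X,R))$ is the intersection of all such $K$, this is enough. Since $Min(Y)$ is finite, $f = \sum_{x\in Min(Y)} f_x$ with $f_x \in S(x, Soc_l(R))$ is a finite sum, and since $\kappa(x)$ is finite each $f_x = \sum_{v\in U(x)}\alpha_{x,v}e_{xv}$ is itself a finite sum with coefficients $\alpha_{x,v}\in Soc_l(R)$. Because $x \in Min(Y) = Min(X)$, Lemma~\ref{Lemma1-2Singh}(iii) applies and tells us that $C(x,v) = \{r\in R : re_{xv}\in Re_{xv}\cap K\}$ is a left essential ideal of $R$, hence contains $Soc_l(R)$. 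Thus $\alpha_{x,v}e_{xv}\in C(x,v)e_{xv} = Re_{xv}\cap K \subset K$, whence each $f_x$ and therefore $f$ itself lies in $K$.

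The conceptual point is that the Artinian-nonsingular hypothesis on $R$ substitutes for the ``$Min(X)$ is a maximal antichain'' hypothesis of Theorem~\ref{MinSocle}: Lemma~\ref{Dn} plays the role that Lemma~\ref{C(I)} played there, producing a small enough family of essential ideals whose intersection controls the socle. I do not expect any serious obstacle; the only mild subtlety is observing that if $x\in Min(X)$ and $v\in U(x)$, then automatically $v\in Y$ by definition of $Y$, so the support of each $f_x$ lies harmlessly inside $Y\times Y$ and the argument never has to engage with $Z=X\setminus Y$.
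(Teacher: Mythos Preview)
Your proof is correct and follows essentially the same approach as the paper: one inclusion via Lemma~\ref{Dn}(iii)(iv) (using $Min(Y)=Min(X)$ finite and the Artinian-nonsingular hypothesis), and the reverse inclusion by showing every $f\in\bigoplus_{x\in Min(Y)}S(x,Soc_l(R))$ lies in an arbitrary essential left ideal $K$ via Lemma~\ref{Lemma1-2Singh}(iii), exactly as in Theorem~\ref{MinSocle}. Your closing remarks about the role of the Artinian-nonsingular hypothesis and the support lying in $Y\times Y$ are accurate observations that the paper does not make explicit.
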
	
\begin{proof}
By Lemma \ref{Veli}(ii), $Min(X)=Min(Y)$ and is a finite set, so by Lemma~\ref{Dn}(iii) and (iv), $Soc_{l}({I(X,R)}) \subseteq \bigoplus_{x\in Min(Y)}S(x,Soc_{l}(R))$. Also, by hypothesis $\kappa(x)$ is finite for any $x\in X$. Now, let $f\in \bigoplus_{x \in Min(Y)} S(x,Soc_l(R))$, and $K$ be any left essential ideal of $I(X,R)$. 

Now, $\displaystyle f=\sum_{x \in  \{x_1,x_2, ..., x_n \} \subset Min(Y)}f_{x}$ where $f_{x}\in S(x_{i},Soc_{l}(R))$ for
some $x_{i}\in Min(Y)$, $i=1,\cdots ,n$. Let $x\in \{x_{1},\cdots,x_{n}\} $ then 
\begin{equation*}
f_{x}=\sum_{x\leq y} \alpha_{y}e_{xy}\text{ where }%
\alpha_{y}\in Soc_{l}(R).
\end{equation*}%
Since $\kappa(x)< \infty$, this sum is finite. Then 
\begin{equation*}
C(x,y)=\{r\in R: re_{xy}\in Re_{xy}\cap K \} 
\end{equation*} is a left essential ideal of $R$ by Lemma~\ref{Lemma1-2Singh}(iii). So $ Soc_{l}(R)\subseteq C(x,y)$. 
Since $\alpha_{y}\in Soc_{l}(R)\subseteq C(x,y)$, 
\begin{equation*}
\alpha_{y}e_{xy}\in C(x,y)e_{xy}=Re_{xy}\cap K\subseteq K
\end{equation*}
and $f_{x}\in K$, hence $f\in K$. For any left essential ideal $K$, 
\begin{equation*}
\bigoplus_{x \in Min(Y)} S(x,Soc_l(R))\subseteq  K, 
\end{equation*}
and
\begin{equation*}
 \bigoplus_{x \in Min(Y)} S(x,Soc_l(R))\subseteq \bigcap_{ \mbox{all essential } K} K = Soc_l({I(X,R)}).  
\end{equation*}
This completes the proof. 
\end{proof}


\begin{example}
	Let $R$ be an Artinian nonsingular ring and $X =Y \sqcup Z$ where $Y=\{z_1,y_1,y_2,y_3,y_4\}$ and $Z= \{z_i~:~ i \in \mathbb{N}, \, \, i \geq 2 \}$ be a
	partially ordered set with the relations $y_i \leq z_1 $, for $1\leq i \leq  4$ and  $z_{i+1} \leq z_i $ for all $i \in \mathbb{N}$. The Hasse diagram of $X$ is 
	\begin{equation*}	
	\xymatrix{&&{\bullet}^{z_1}&&& \\
		{\bullet}^{y_1}\ar@{-}[urr]&{\bullet}^{y_2}\ar@{-}[ur]&{\bullet}^{z_2}%
		\ar@{-}[u]&{\bullet}^{y_3}\ar@{-}[ul]& {\bullet}^{y_4}\ar@{-}[ull]& \\
		&&{\bullet}^{z_3}\ar@{-}[u]\ar@{-}[d]&&& \\ &&\vdots &&& }
		\end{equation*}
		Now, $Min(Y) = \{ y_1,y_2,y_3,y_4\}$ and by Theorem \ref{Main}, 
\begin{equation*}
		Soc_{l}({I(X,R)})=\bigoplus_{x\in Min(Y)}S(x,Soc_{l}(R)) = \bigoplus_{i=1}^4 S(y_i,Soc_{l}(R)).
\end{equation*}
\end{example}

Now we will use the minimal left ideals of the ring to calculate the socle.  
\begin{lemma} \label{minimal}
Let $R$ be an arbitrary ring and $A$ be a minimal left ideal. Let $X$ be a locally finite partially ordered set with $x \in Min(X)$. Then for any $g \in S(x,A)$, the left ideal generated by $g$ is a minimal left ideal of $I(X,R)$. Moreover
\begin{equation*}
\bigoplus_{g \in S(x,A)} \langle g \rangle _l = S(x,A).
\end{equation*}
\end{lemma}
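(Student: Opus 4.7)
The plan is to exploit the minimality of $x$ in order to collapse the $I(X,R)$-module structure on $S(x,A)$ to a plain $R$-module structure. For any $f \in I(X,R)$ and $g \in S(x,A)$, I would compute
\begin{equation*}
(fg)(u,v) = \sum_{u \leq z \leq v} f(u,z) g(z,v);
\end{equation*}
the condition $g(z,v) \neq 0$ forces $z = x$, and $u \leq z = x$ together with $x \in Min(X)$ then forces $u = x$. This gives $(fg)(x,v) = f(x,x) g(x,v)$ and $(fg)(u,v) = 0$ otherwise, so $fg \in S(x,A)$ and the left action of $I(X,R)$ on $S(x,A)$ factors through the ring homomorphism $\pi : I(X,R) \to R$, $f \mapsto f(x,x)$. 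In particular $\langle g \rangle_l = Rg$ as a subset of $S(x,A)$, and $I(X,R)$-submodules of $S(x,A)$ coincide with $R$-submodules.

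Next I would prove minimality in the one-term case $g = a e_{xy}$ with $0 \neq a \in A$. There $\langle g \rangle_l = (Ra) e_{xy} = A e_{xy}$, the second equality using minimality of $A$ (so $Ra = A$). The map $\alpha \mapsto \alpha e_{xy}$ is an isomorphism of left $R$-modules from $A$ onto $A e_{xy}$ which transports the $I(X,R)$-action on $A e_{xy}$ (via $\pi$) to the native $R$-action on $A$. Since $A$ is simple as a left $R$-module, $A e_{xy}$ has no proper non-zero $I(X,R)$-submodule and so is a minimal left ideal of $I(X,R)$.

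For the moreover statement, each $g \in S(x,A)$ expands uniquely as $g = \sum_{y \geq x} g(x,y) e_{xy}$ with $g(x,y) \in A$, so that $\langle g \rangle_l \subseteq \sum_{y \geq x} A e_{xy}$ and globally $S(x,A) = \sum_{y \geq x} A e_{xy}$. The summands $A e_{xy}$ for distinct $y$ have pairwise disjoint supports on $X \times X$, so this sum is direct and exhibits $S(x,A)$ as a direct sum of the minimal left ideals from the previous step. The main obstacle I anticipate lies in the case of a $g$ with several non-zero entries at distinct positions $(x,y_1), \ldots, (x,y_k)$: the cyclic submodule $Rg$ then sits diagonally inside $\bigoplus_i A e_{xy_i}$, and its simplicity is controlled by the joint annihilator $\bigcap_i \operatorname{ann}_l(g(x,y_i))$ rather than by any single entry. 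The cleanest route is therefore to reduce to the one-term generators $a e_{xy}$ as above and derive the general claim from the direct-sum decomposition $S(x,A) = \bigoplus_{y \geq x} A e_{xy}$ into minimal left ideals.
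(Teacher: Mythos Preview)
Your reduction of the $I(X,R)$-action on $S(x,A)$ to the $R$-action via $\pi(f)=f(x,x)$ is correct and coincides with the paper's first step, as does your treatment of the one-term generators $a e_{xy}$. The place where you stop short---the multi-entry $g$---is a genuine obstruction, not a technicality: the first assertion of the lemma is false as stated. Take $R = M_2(k)$ with matrix units $E_{ij}$, let $A = RE_{11}$ be the first column (a minimal left ideal), let $X = \{x, y_1, y_2\}$ with $x < y_1$ and $x < y_2$, and define $g \in S(x,A)$ by $g(x,y_1) = E_{11}$, $g(x,y_2) = E_{21}$, $g(x,x) = 0$. Then $\langle g \rangle_l = Rg = Ae_{xy_1} \oplus Ae_{xy_2}$, since $r E_{11}$ and $r E_{21}$ range independently over $A$ as $r$ ranges over $R$; this properly contains $Ae_{xy_1}$ and hence is not minimal. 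Your proposed workaround---deduce the general case from the decomposition $S(x,A) = \bigoplus_y Ae_{xy}$---cannot close this gap: semisimplicity of $S(x,A)$ does not force every cyclic submodule to be simple.

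The paper's own argument attacks general $g$ head-on: writing any nonzero $f \in \langle g \rangle_l$ as $f = ag$, it seeks a single $r_0 \in R$ with $a_v = r_0 a a_v$ simultaneously for every $v$. This step is asserted without justification, and the same example (take $a = E_{12}$, so that $a a_{y_1} = 0$ while $a_{y_1} \neq 0$) shows it fails. What both your outline and the paper genuinely establish---and what Proposition~\ref{main} actually uses---is that each $A e_{xy}$ is a minimal left ideal, whence $\sum_{y \geq x} A e_{xy} \subseteq Soc_l(I(X,R))$. A secondary point: when $\kappa(x)=\infty$ the set $S(x,A)$ is the full product $\prod_{y \geq x} A$ rather than the direct sum, so your identity $S(x,A) = \bigoplus_{y \geq x} Ae_{xy}$ also needs an upper-finiteness hypothesis.
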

\begin{proof}
Consider $g \in S(x,A)$, where $x \in Min(X)$, 

for any $u,v \in X$, $g(u,v) = 
\begin{cases}
a_{v} \mbox{ if } u=x, v \geq u \\ 
0 \mbox{ if } u \neq x
\end{cases}
$ where $a_v \in A$. We would like to show that the left ideal generated by $g$ 
\\

\{ag : 
$\begin{array}{cl}
g(u,v) \in A & \mbox{ if } u=x, v \geq u \\ 
g(u,v) = 0 & \mbox{ if } u \neq x,%
\end{array}
a \in R \}$	
is a minimal
left ideal of ${I(X,R)}$. 
One can write the left ideal generated by $g$ as:
\begin{equation*}
\langle g \rangle _l=\{f \in I(X,R):f(x,v)=aa_v \mbox { for some }  a \in R \}.
\end{equation*}
Fix some $f \in \langle g \rangle _l$ such that $f(x,v)=aa_v$ for some $a \in R$. Then
$\langle f \rangle _l=\{h \in I(X,R):\begin{array}{cl}
h(u,v) = 0 & \mbox { if } u \neq x, \\ 
h(x,v) = raa_v  & \mbox { if } v \geq x  %
\end{array}
\mbox { for some }  a \in R \}.$ Since $a_v \in A$ and $A$ is a minimal left ideal of $R$, $aa_v \in A$. For any $r \in R$, we get $raa_v \in \langle aa_v \rangle _l \subseteq A$, hence $\langle aa_v \rangle _l=A$. For each $a_v \in A$, we can generate $A$ with $\langle aa_v \rangle _l$. That is, there exists $r_0 \in R$ such that $a_v=r_0aa_v$. For any $g \in S(x,A)$, $g(x,v)=a_v=r_0aa_v=r_0f(x,v)$. Hence
\begin{equation*}
\begin{array}{cl}
g(x,v)=r_0f(x,v) & \mbox{ for all } v, \\ 
g(u,v) = 0 =r_0f(u,v) & \mbox{ if } u \neq x.%
\end{array}
\end{equation*}
So $g = r_0e_{xx}f \in \langle f \rangle _l$. Thus 
\begin{equation*}
\langle g \rangle _l \subseteq \langle f \rangle _l \subseteq \langle g \rangle _l.
\end{equation*}
That is $\langle f \rangle _l= \langle g \rangle _l $. Hence we proved the left ideal generated by $g$ is a minimal
left ideal of ${I(X,R)}$. 
\end{proof}	
\begin{proposition} \label{main}
	Assume $R$ is a ring with finitely many minimal left ideals and $X=\mathbb{Z}^+$. Then $Soc_l(I(\mathbb{Z}^+,R))=S(1,Soc_l(R))$.
\end{proposition}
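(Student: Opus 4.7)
The plan is to prove the two inclusions separately. Observe first that $Min(\mathbb{Z}^+) = \{1\}$ is trivially a maximal antichain, yet $\kappa(1) = \infty$, so Theorem~\ref{MinSocle} does not directly apply; the finiteness of minimal left ideals of $R$ is what will substitute for the $\kappa$-finiteness.

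For the easy containment $Soc_l(I(\mathbb{Z}^+, R)) \subseteq S(1, Soc_l(R))$, I would invoke Lemma~\ref{C(I)}(iii), which, since $Min(\mathbb{Z}^+) = \{1\}$ is a maximal antichain, immediately gives
\[
Soc_l(I(\mathbb{Z}^+, R)) \subseteq \bigoplus_{x \in Min(\mathbb{Z}^+)} S(x, Soc_l(R)) = S(1, Soc_l(R)).
\]

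For the reverse containment, enumerate the finitely many minimal left ideals of $R$ as $A_1, \ldots, A_n$, so that $Soc_l(R) = A_1 + \cdots + A_n$. By Lemma~\ref{minimal}, for each $i$ and every $g \in S(1, A_i)$, the ideal $\langle g \rangle_l$ is a minimal left ideal of $I(\mathbb{Z}^+, R)$ containing $g$, hence $S(1, A_i) \subseteq Soc_l(I(\mathbb{Z}^+, R))$. Next I would show that $S(1, Soc_l(R)) = S(1, A_1) + \cdots + S(1, A_n)$ as follows: for $f \in S(1, Soc_l(R))$, for each $v \geq 1$ pick a decomposition $f(1, v) = a_1^v + \cdots + a_n^v$ with $a_i^v \in A_i$, and define $f_i$ by $f_i(1, v) = a_i^v$ and $f_i(u, v) = 0$ for $u \neq 1$, so that $f_i \in S(1, A_i)$ and $f = f_1 + \cdots + f_n$. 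This expresses $f$ as a finite sum of elements of $Soc_l(I(\mathbb{Z}^+, R))$, completing the inclusion.

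The only conceptual point, and hence the main ``obstacle'', is recognizing the role of the hypothesis on $R$. The finiteness of the minimal left ideals is exactly what allows every value $f(1, v)$ to be expanded against the \emph{same} finite list $A_1, \ldots, A_n$, producing a finite sum $f = f_1 + \cdots + f_n$ of elements of $Soc_l(I(\mathbb{Z}^+, R))$. Without this hypothesis, the values $f(1, v)$ across infinitely many $v$ could involve infinitely many distinct minimal left ideals of $R$, and no finite decomposition of $f$ inside $Soc_l(I(\mathbb{Z}^+, R))$ would be available.
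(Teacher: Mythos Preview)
Your proposal is correct and follows essentially the same approach as the paper's own proof: both invoke Lemma~\ref{C(I)}(iii) for the inclusion $Soc_l(I(\mathbb{Z}^+,R)) \subseteq S(1,Soc_l(R))$, and for the reverse both use Lemma~\ref{minimal} to get $S(1,A_i) \subseteq Soc_l(I(\mathbb{Z}^+,R))$ for each minimal left ideal $A_i$, then decompose an arbitrary $f \in S(1,Soc_l(R))$ coordinatewise as $f = f_1 + \cdots + f_n$ with $f_i \in S(1,A_i)$. Your explanatory remark about why the finiteness hypothesis on minimal left ideals is needed is also accurate and matches the role that hypothesis plays in the paper's argument.
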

\begin{proof}
By Lemma \ref{C(I)}(iii), $Soc_l(I(\mathbb{Z}^+,R)) \subseteq S(1,Soc_l(R))$. For the converse inclusion, we use Lemma \ref{minimal}. 
Since $Soc_l(I(X,R))$ is the sum of all minimal left ideals of $I(X,R)$,
\begin{equation*}
\bigoplus_{A} S(1,A) \subseteq Soc_l(I(X,R))
\end{equation*} where $A$ ranges over all minimal left ideals of $R$. 

Now, we will prove that 
if $R$ has finitely many minimal left ideals $\{A_i \}_{i=1}^n$, then
\begin{equation*}
S(1,Soc_l(R)) = \bigoplus_{i=1}^n S(1,A_i)  
\end{equation*} and hence
\begin{equation*}
S(1,Soc_l(R)) = Soc_l(I(X,R)).
\end{equation*} 

Since $R$ has finitely many minimal left ideals $\{A_i \}_{i=1}^n$, $ Soc_l(R) = \bigoplus_{i=1}^n A_i$.
Given any $f \in S(1,Soc_l(R))$ and $y \in X$, we have $f(1,y) \in Soc_l(R) = \bigoplus_{i=1}^n A_i$. That is 
\begin{equation*}
f(1,y)= \sum_{i=1}^{n} {a^y_{i}}
\end{equation*} where $a^y_{i} \in A_i$. Define the functions  $f_i \in I(\mathbb{Z}^+,R)$ for $i=1, ..., n$ as: 
\begin{equation*} f_i(x,y)= \begin{cases}
 0  & \text{ if } x \neq 1 \\ 
 a^y_{i} & \text{ if }  x = 1, 
\end{cases} 
\text{ for any } y \in \mathbb{Z}^+. 
\end{equation*} 
So $f_i \in S(1,A_i)$, hence $ f= \sum_{i=1}^{n}{f_i} \in \bigoplus_{i=1}^n S(1,A_i)$. Clearly, $\bigoplus_{i=1}^n S(1,A_i)$ is contained in $S(1,Soc_l(R))$. This completes the proof.
\end{proof}

We want to state that when $R$ is chosen to be a field or $\mathbb{Z}$, socle and singular ideal are easy to compute. 

\begin{example}
Let $X=\mathbb{Z}^+$, then $Min(X) = \{ 1\}$ is a singleton and $Min(X)$ is
a maximal antichain, but $\kappa(x)$ is not finite for any $x$.

\begin{itemize}
\item if $R =\mathbb{Z} $, $Sing(\mathbb{Z}) = \{ 0\} = Soc(\mathbb{Z})$. 
\newline
By Proposition~\ref{Nonsingular}, $Sing((I,\mathbb{Z})) = \{ 0\}$ and by
Lemma~\ref{C(I)}, 

$Soc_l(I(X,\mathbb{Z})) \subset S(1,Soc(\mathbb{Z})) = \{0\}$.

\item $R=F$ is a field, then $Sing(F) = \{ 0\}$, $Soc(F)=F$. \newline
By Proposition~\ref{Nonsingular}, $Sing((I,F)) = \{ 0\}$ and since $F$ has a unique minimal (left) ideal $F$ itself by Proposition~\ref{main}, 
\begin{equation*}
Soc_l(I(X,F)) = S(1,F).
\end{equation*}
\end{itemize}
\end{example}


\bigskip

We conclude this section by describing the right socle of ${I(X,R)}$ and give an example of an incidence algebra having the same left and right socle. 

For any $x \in Max(X)$, fix a right ideal $I_x$ of $R$ and define $%
T(x,I_x)$ as 
\begin{equation*}
T(x,I_x) = \{f \in I(X,I_x) : \, f(u,v) = 0 \mbox{ if } v \neq x \}.
\end{equation*}
Then, we form 
\begin{equation*}
B({I(X,R)}) := \bigoplus_{x \in Max(X)}T(x,I_x) = \bigoplus_{x \in Max(X)}
\{f \in I(X,I_x) : \, f(u,v) = 0 \mbox{ if } v \neq x \}
\end{equation*}
which is a two-sided ideal of ${I(X,R)}$.

Equivalent to Lemma \ref{C(I)}, we have the following results.

\begin{lemma}
\label{B(I)} Assume $X$ is a locally finite partially ordered set where $%
Max(X)$ is a maximal antichain and $\{I_x : x \in Max(X)\}$ is a collection
of right essential ideals of $R$.

\begin{enumerate}
\item $B({I(X,R)})$ is a right essential ideal of ${I(X,R)}$.

\item Let $B$ be the intersection of all right essential ideals of the form $%
B({I(X,R)})$. Then 
\begin{equation*}
B = \bigoplus_{x \in Max(X)} T(x,Soc_r(R)).
\end{equation*}

\item $Soc_r({I(X,R)}) \subset B$.
\end{enumerate}
\end{lemma}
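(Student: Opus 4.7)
My plan is to dualize the proof of Lemma~\ref{C(I)} throughout, exchanging $Min(X)$ with $Max(X)$, $S(x,I_x)$ with $T(x,I_x)$, and every ``left'' with ``right.'' The hypotheses imposed here are the exact duals of those in Lemma~\ref{C(I)}, so each of the three assertions should follow by a formal mirror of the earlier argument.

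For part (1), I would take an arbitrary nonzero $f \in I(X,R)$ and pick a pair $(u,v)$ with $f(u,v) \neq 0$. Because $Max(X)$ is a maximal antichain, the dual of Lemma~\ref{propPoset}(i) supplies an element $w \in Max(X)$ with $v \leq w$. I would then right-multiply $f$ by an element of the form $r\,e_{vw}$; the scalar $r$ is chosen, using the right-essentiality of $I_w$ in $R$, so that $f(u,v)r$ is a nonzero element of $I_w$ and, at the same time, every other entry $f(a,v)r$ appearing along column $w$ of the product also lies in $I_w$. This guarantees that $f\cdot r\,e_{vw}$ is a nonzero element of $fI(X,R) \cap T(w,I_w) \subseteq fI(X,R) \cap B(I(X,R))$, confirming right-essentiality.

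For part (2), the argument is entirely formal: an element of $B$ belongs to $B(I(X,R))$ for every choice of right essential ideals $\{I_x\}_{x\in Max(X)}$, so each of its column values $g(-,x)$ (for $x \in Max(X)$) lies in $\bigcap_{I_x \text{ right essential}} I_x = Soc_r(R)$, giving one inclusion. The reverse inclusion is immediate because $Soc_r(R) \subseteq I_x$ for any right essential ideal $I_x$, so every element of $\bigoplus_{x \in Max(X)} T(x,Soc_r(R))$ already lies in every such $B(I(X,R))$. Part (3) then follows by combining part (1) with the standard description of $Soc_r(I(X,R))$ as the intersection of all right essential ideals of $I(X,R)$.

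The main obstacle is the construction inside part (1): I must ensure \emph{every} entry of $f \cdot r\,e_{vw}$ (not only the highlighted one at $(u,w)$) lies in $I_w$. This is handled by choosing $r$ in a suitable intersection of right ideals of $R$ of the form $\{s \in R : f(a,v)s \in I_w\}$, exploiting the right-essential character of $I_w$ exactly as in the left-handed analog.
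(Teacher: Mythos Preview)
Your plan matches the paper exactly: Lemma~\ref{B(I)} is stated there without proof, introduced simply as the right-handed analogue of Lemma~\ref{C(I)} (``Equivalent to Lemma~\ref{C(I)}, we have the following results''), and the paper's proof of Lemma~\ref{C(I)} itself merely asserts part~(i) (the essentiality being imported from~\cite{K:Dense}) while giving the short formal arguments for (ii) and (iii) that you reproduce in dual form.

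One caution about your expanded sketch of part~(1): the family of right ideals $\{s\in R: f(a,v)s\in I_w\}$ indexed by the elements $a\le v$ may well be infinite, and an infinite intersection of essential right ideals need not remain essential (or even nonzero), so the single-$r$ construction via $r\,e_{vw}$ is not obviously sufficient in general. The paper does not spell this step out either, so your deferral to ``the left-handed analog'' is on exactly the same footing as the paper's own deferral to~\cite{K:Dense}; just be aware that the concrete mechanism you describe would need more justification than a finite intersection argument if you were to write it out in full.
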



\medskip 

Under the assumption that $Max(X)$ is a maximal antichain and $\lambda(x)$
is finite for any $x \in Max(X)$, the description of the right socle follows.

\begin{theorem}
\label{MaxSocle} Assume $X$ is a locally finite partially ordered set where $%
Max(X)$ is a maximal antichain and $\lambda(x)$ is finite for any $x \in
Max(X)$. Then, 
\begin{equation*}
Soc_r({I(X,R)}) = \bigoplus_{x \in Max(X)} T(x,Soc_r(R))
\end{equation*}
\end{theorem}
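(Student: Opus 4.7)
The plan is to mirror the argument used to prove Theorem \ref{MinSocle}, with every ``left'' replaced by ``right'' and with $Min(X)$, $\kappa(x) = |U(x)|$, $S(x,-)$, and $e_{xy}$ ($x\leq y$) replaced throughout by $Max(X)$, $\lambda(x) = |L(x)|$, $T(x,-)$, and $e_{yx}$ ($y\leq x$). Everything needed for this transfer is already in place: the authors note in the introduction that the right theory is obtained from the left theory by swapping $Min(X)$ with $Max(X)$ and upper finiteness with lower finiteness, and Lemma \ref{B(I)} is the explicit right analogue of Lemma \ref{C(I)}.

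First I would get the easy inclusion $\supseteq$ from the right dual of Lemma \ref{propPoset}(ii): since $Max(X)$ is a maximal antichain and $\lambda(x)$ is finite for every $x \in Max(X)$, $X$ is lower finite. In particular, $\lambda(y) < \infty$ for every $y \in X$. The other inclusion $Soc_r({I(X,R)}) \subseteq \bigoplus_{x\in Max(X)} T(x, Soc_r(R))$ is precisely Lemma \ref{B(I)}(iii), so this direction is done at once.

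For the reverse inclusion, I would take an arbitrary right essential ideal $K$ of ${I(X,R)}$ and an arbitrary element $f \in \bigoplus_{x \in Max(X)} T(x, Soc_r(R))$, then show $f \in K$. Write
\[
f = \sum_{x \in \{x_1,\ldots,x_n\} \subset Max(X)} f_x, \qquad f_x \in T(x, Soc_r(R)),
\]
and for each such $x$ expand $f_x = \sum_{y \leq x} \beta_y e_{yx}$ with $\beta_y \in Soc_r(R)$. Since $\lambda(x) < \infty$, this inner sum is finite. Invoking the right-handed dual of Lemma \ref{Lemma1-2Singh}(iii), the set
\[
C'(y,x) = \{ r \in R : e_{yx} r \in e_{yx} R \cap K \}
\]
is a right essential ideal of $R$ whenever $x \in Max(X)$ and $y \leq x$. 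Therefore $Soc_r(R) \subseteq C'(y,x)$, which gives $\beta_y e_{yx} \in K$ for every summand, and hence $f_x \in K$ and $f \in K$. Intersecting over all right essential ideals $K$ yields $f \in Soc_r({I(X,R)})$.

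The only genuinely new ingredient here is the right-handed version of Lemma \ref{Lemma1-2Singh}(iii); everything else is a direct transcription of the proof of Theorem \ref{MinSocle}. That dual lemma is a standard symmetric reformulation of the cited result from \cite{Singh:MaxQuotientIncAlg}, and the authors have already signalled this symmetry explicitly, so I do not expect any real obstacle. The main care point is simply notational: writing the basis elements in the form $e_{yx}$ with $y \leq x \in Max(X)$ and remembering that the relevant finiteness now comes from $\lambda(x)$ rather than $\kappa(x)$.
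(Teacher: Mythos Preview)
Your proposal is correct and matches the paper's approach: the paper does not write out a separate proof of Theorem~\ref{MaxSocle} at all, relying instead on the announced left/right symmetry, and your argument is precisely the dual of the proof of Theorem~\ref{MinSocle} (Lemma~\ref{B(I)}(iii) for the inclusion $Soc_r\subseteq\bigoplus T$, and the right-handed version of Lemma~\ref{Lemma1-2Singh}(iii) for the reverse). One small wording issue: in your second paragraph you announce ``the easy inclusion $\supseteq$'' but then immediately dispose of $\subseteq$ via Lemma~\ref{B(I)}(iii) and defer $\supseteq$ to the next paragraph; tightening that phrasing would avoid confusion, but the mathematics is right.
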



\medskip 

When $X$ is a finite partially ordered set, the hypothesis of Theorem
\ref{MinSocle} and Theorem \ref{MaxSocle} are satisfied. As an immediate
corollary, the left and the right socle of the incidence algebra are given explicitly
as 
\begin{equation*}
Soc_l({I(X,R)}) = \bigoplus_{x \in Min(X)} S(x,Soc_l(R)),
\end{equation*} 
\begin{equation*}
Soc_r({I(X,R)}) = \bigoplus_{x \in Max(X)} T(x,Soc_r(R)).
\end{equation*}
This leads to constructing examples of incidence algebras with distinct left and right socles:


\begin{example}
\label{Exfinite} Let $X =\{x,y,z\}$ be a partially ordered set with the
non-reflexive relations $x \leq y $, $x \leq z$. Hence, Hasse diagram of $X$
is 
\begin{equation*}
\xymatrix{{\bullet}^{y}&&{\bullet}^{z} \\
&{\bullet}^{x}\ar@{-}[ul]\ar@{-}[ur]&.}
\end{equation*}
By mapping each $f \in I(X,R)$ to $\left [ 
\begin{array}{ccc}
f(x,x) & f(x,y) & f(x,z) \\ 
0 & f(y,y) & 0 \\ 
0 & 0 & f(z,z)%
\end{array}
\right ]$,

${I(X,R)}$ is isomorphic to $R$-algebra 
\begin{equation*}
\left( 
\begin{array}{ccc}
R & R & R \\ 
0 & R & 0 \\ 
0 & 0 & R%
\end{array}
\right).
\end{equation*}
By Theorem~\ref{MinSocle}, $Soc_l({I(X,R)}) = \left( 
\begin{array}{ccc}
Soc_l(R) & Soc_l(R) & Soc_l(R) \\ 
0 & 0 & 0 \\ 
0 & 0 & 0%
\end{array}
\right).$ 
By Theorem~\ref{MaxSocle}, $Soc_r({I(X,R)}) = \left( 
\begin{array}{ccc}
0 & Soc_r(R) & Soc_r(R) \\ 
0 & Soc_r(R) & 0 \\ 
0 & 0 & Soc_r(R)%
\end{array}
\right).$ 

Hence, $Soc_l({I(X,R)}) \neq Soc_r({I(X,R)})$.

It is well-known that the left singular ideal of any ring is contained in
the right annihilator of its left socle. Moreover, when the ring is left
Artinian, then the left singular ideal of the ring is exactly the right
annihilator of the left socle (eg. see \cite[Proposition 2.1.4]{Lam:Lectures}). In \cite[Ch.8, p.305]{SO:IncAlg}, it is stated that ${I(X,R)}$ is left Artinian if and only if $X$ is finite and $R$ is left
Artinian. In view of all these results, let $R$ be an Artinian ring, thus we further provide an example such that 
$Sing_l({I(X,R)}) = ann_r(Soc_l({I(X,R)}))$.

By Corollary~\ref{CorMinSingular}, 
\begin{equation*}
Sing_l({I(X,R)}) = I(X,Sing_l(R))= \left( 
\begin{array}{ccc}
Sing_l(R) & Sing_l(R) & Sing_l(R) \\ 
0 & Sing_l(R) & 0 \\ 
0 & 0 & Sing_l(R)%
\end{array}
\right) .
\end{equation*}
Note that $Soc_l({I(X,R)}) Sing_l({I(X,R)}) = 0$.
\end{example}




\medskip

The next proposition gives the necessary conditions for the
left and the right socle of an incidence algebra to be the same.

\begin{proposition}
\label{Socl=Socr} 
If $Soc_{l}(R)=Soc_{r}(R)$ and $X$ is an antichain then $Soc_{l}({I(X,R)}%
)=Soc_{r}({I(X,R)}).$ Moreover, in this case, the left/right socle of ${%
I(X,R)}$ is 
\begin{equation*}
Soc({I(X,R)})=\displaystyle\bigoplus_{x\in X}Soc(R)
\end{equation*}%
where $Soc(R):=Soc_{l}(R)=Soc_{r}(R)$.
\end{proposition}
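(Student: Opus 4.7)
The plan is to reduce the statement directly to the two socle computations already established (Theorem~\ref{MinSocle} and Theorem~\ref{MaxSocle}) by checking that both of their hypotheses are automatic once $X$ is an antichain, and then observing that in an antichain the building blocks $S(x,-)$ and $T(x,-)$ both collapse to a single diagonal copy of the coefficient ideal.

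First I would invoke Lemma~\ref{propPoset}(iii) to get $Min(X)=Max(X)=X$. Since $X$ itself is an antichain, it is trivially a maximal antichain, so both $Min(X)$ and $Max(X)$ are maximal antichains. For any $x\in X$, the only element comparable to $x$ is $x$ itself, so $U(x)=L(x)=\{x\}$ and hence $\kappa(x)=\lambda(x)=1$ is finite. Thus the hypotheses of Theorem~\ref{MinSocle} and Theorem~\ref{MaxSocle} are simultaneously satisfied, yielding
\begin{equation*}
Soc_{l}({I(X,R)})=\bigoplus_{x\in X}S(x,Soc_{l}(R)),\qquad
Soc_{r}({I(X,R)})=\bigoplus_{x\in X}T(x,Soc_{r}(R)).
\end{equation*}

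Next I would unpack $S(x,Soc_{l}(R))$ and $T(x,Soc_{r}(R))$ using the fact that a function $f\in I(X,R)$ satisfies $f(u,v)=0$ unless $u\le v$; in an antichain this forces $u=v$. Hence $f\in S(x,Soc_{l}(R))$ is forced to vanish off the single pair $(x,x)$, and the assignment $f\mapsto f(x,x)$ gives an isomorphism $S(x,Soc_{l}(R))\cong Soc_{l}(R)$. The identical argument gives $T(x,Soc_{r}(R))\cong Soc_{r}(R)$. Combining with the hypothesis $Soc_{l}(R)=Soc_{r}(R)=:Soc(R)$, the two direct sum decompositions coincide term-by-term, so
\begin{equation*}
Soc_{l}({I(X,R)})=\bigoplus_{x\in X}Soc(R)=Soc_{r}({I(X,R)}),
\end{equation*}
which is exactly the claimed equality and formula.

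There is no real obstacle here; the only thing to be careful about is confirming that the antichain hypothesis alone is enough to verify the finiteness assumptions $\kappa(x)<\infty$ and $\lambda(x)<\infty$ in the cited theorems (it is, trivially, because $U(x)=L(x)=\{x\}$), and to check that the ideal-generating left ideals $I_{x}$ appearing in the definitions of $S$ and $T$ do coincide with $Soc_{l}(R)=Soc_{r}(R)$ after passing through the intersection of all essential ideals as in Lemmas~\ref{C(I)} and~\ref{B(I)}. Once those pro forma checks are in hand, the proof is essentially a one-line identification.
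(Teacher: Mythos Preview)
Your proposal is correct and follows essentially the same route as the paper: invoke Lemma~\ref{propPoset}(iii) to get $Min(X)=Max(X)=X$ as a maximal antichain with $\kappa(x)=\lambda(x)=1$, apply Theorems~\ref{MinSocle} and~\ref{MaxSocle}, and then identify $S(x,Soc_l(R))$ with $T(x,Soc_r(R))$ via $Soc_l(R)=Soc_r(R)$. The extra caution in your final paragraph about passing through Lemmas~\ref{C(I)} and~\ref{B(I)} is unnecessary, since the cited theorems already deliver the socle formulas directly.
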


\begin{proof}
Assume $Soc_l(R)=Soc_r(R)$ and $X$ is an antichain. Then $\kappa(x) = 1=
\lambda(x)$ and Lemma~\ref{propPoset}(iii), $Min(X) = X = Max(X)$ is a
maximal antichain. By Proposition~\ref{MinSocle} and Proposition~\ref{MaxSocle}, 
\begin{equation*}
Soc_l({I(X,R)}) =\bigoplus_{x \in Min(X)} \{f \in I(X,Soc_l(R)) : \, f(u,v)
= 0 \mbox{ if } u \neq x\} 
\end{equation*}
\begin{equation*}
\cong \bigoplus_{x \in X} Soc_l(R)
= \bigoplus_{x \in X} Soc_r(R) \end{equation*}
\begin{equation*}\cong \bigoplus_{x \in Max(X)} \{f \in
I(X,Soc_r(R)) : \, f(u,v) = 0 \mbox{ if } v \neq x\} = Soc_r({I(X,R)})
\end{equation*}
Hence, in this case, the socle of the incidence algebra is the direct sum of 
$|X|$-many copies of the socle of $R$. ie. $Soc({I(X,R)}) = \bigoplus_{x \in
X} Soc(R) $.
\end{proof}


\begin{remark}
\bigskip Note that the left and the right socle of an incidence algebra might be equal when $X$ is not an antichain. 
Consider the case $X=\mathbb{Z}.$ 
Since $Min(X)=\emptyset $ by Lemma \ref{Zn}, $Soc_{l}(I(X,R))=\left\{
0\right\} $ and analogously $Max(X)=\emptyset ,$ $Soc_{r}(I(X,R))=\left\{
0\right\}.$
\end{remark}

{\bf Acknowledgement} \\
The authors are deeply thankful to Nesin Math Village, \c{S}irince, Izmir for providing an excellent research environment where this work has been completed. This research question has been motivated by an idea of Prof.Eugene Spiegel of University of Connecticut while the first author was conducting her Ph.D. under his supervision. We dedicate this long time overdue paper to his memory. 


\end{document}